\def\PP{{\mathbb P}}
\def\CC{{\mathbb C}}
\def\mult{\operatorname{mult}}
\def\Id{I}
\newcounter{myalg}
  {
    \needspace{2\baselineskip}
    \noindent \rule{\linewidth}{1pt} \endgraf
    \refstepcounter{myalg}
    \centering \textsc{Algorithm}~\themyalg%
    \ifthenelse{\isempty{#1}}{}{:\ #1}
  }{
  \noindent \rule{\linewidth}{1pt}
  }%
\newtheorem{example}[theorem]{Example}
\newtheorem{remark}[theorem]{Remark}
\title{Simple determinantal representations of up to quintic
bivariate polynomials}
\author{
Anita Buckley\thanks{%
Department of Mathematics, University of Ljubljana, Jadranska 19,
SI-1000 Ljubljana, Slovenia, {\tt anita.buckley@fmf.uni-lj.si}.}
\and
Bor~Plestenjak\thanks{%
IMFM and Department of Mathematics, University of Ljubljana, Jadranska 19,
SI-1000 Ljubljana, Slovenia, {\tt bor.plestenjak@fmf.uni-lj.si}. This author
was partially supported by the Slovenian Research Agency (P1-0294).}
}
\begin{document}
\maketitle

\begin{abstract}
For bivariate polynomials of degree $n\le 5$ we give
fast numerical constructions of determinantal representations
with $n\times n$ matrices. Unlike some other available
constructions, our approach returns matrices of the smallest
possible size $n\times n$
for all polynomials of degree $n$ and does not require any symbolic
computation. We can apply these linearizations
to numerically compute the roots of a system of two bivariate polynomials by
using numerical methods for two-parameter eigenvalue problems.

\end{abstract}

\begin{keywords}
bivariate polynomial, determinantal representation
\end{keywords}

\begin{AMS}
65F15, 65H04, 65F50, 13P15, 14M12, 14Q05.
\end{AMS}

\section{Introduction}
We say that matrices $A$, $B$, and $C$ form a \textit{determinantal representation} of a bivariate polynomial $p$
if
$$p(x,y)=\det(xA+yB+C).$$
Dixon showed in 1902 \cite{Dixon} that for each
bivariate polynomial of degree $n$ there exists
a determinantal representation with $n\times n$ symmetric matrices.
Even when the matrices are allowed to be nonsymmetric, such
representations are difficult to construct, also for polynomials
of small degrees. In this paper we introduce simple constructions
that can be applied to all bivariate polynomials of degree 5 or less.

Recently, Plestenjak and Hochstenbach
applied determinantal representations in \cite{BorMichiel}
to numerically find roots of a system of two bivariate polynomials
using  numerical methods
for singular two-parameter eigenvalue problems. To make this
approach efficient, one needs determinantal representations with
matrices as small as possible that
can be constructed efficienty. By Dixon, the optimal size is $n\times n$
for a bivariate polynomial of degree $n$ but at present,
no efficient construction for $n\times n$ representations is known
that could be applied to all polynomials.

The above requirements
are most closely met by a recent algorithm in \cite{Bornxn} that, using only simple numerical
 computations, returns a determinantal representation
with $n\times n$ matrices of a square-free bivariate polynomial of degree $n$ and a representation with
$(2n-2)\times (2n-2)$ matrices of a non square-free polynomial.
It is important that it does not require any symbolic
computation, which usually is the bottleneck for this kind of
algorithms.

The algorithm in \cite{Plaumann} gives a determinantal representation
with $n\times n$ matrices for polynomials that satisfy the real zero
condition, however it is computationally too expensive and thus not suitable
as a building block of a root finding software for bivariate polynomials.
Also, we need determinantal representations for all bivariate
polynomials of degree$~n$.

In \cite{BorMichiel} two constructions of determinantal representations
are presented, which can both be constructed fast with little numerical
computation. For generic bivariate polynomials of degrees $3$, $4$, and $5$
the construction in \cite{BorMichiel} returns determinantal
representations with matrices of sizes $3\times 3$, $5\times 5$, and
$8\times 8$, respectively. In addition, the algorithm can fail for
certain cubic and quartic polynomials, in which case the size
of the matrices increases by one.

While the above constructions do not give $n\times n$ representations
 for all bivariate polynomials of degree $n$, we fill the missing gaps for degrees up to 5. For  every bivariate polynomial of degree $n\le 5$ we present
a simple numerical algorithm that returns a representation with $n\times n$ matrices.

The paper is organized as follows. In Section~\ref{sec:planecurves}
we review some basic results on algebraic curves in the complex
projective plane, including a thorough description of pencils of conics. In Section~\ref{sec:ncurve} we introduce a reduction technique to polynomials of lower degrees which is used later as the main tool in our constructions.
In Sections~\ref{sec:quadratic} through \ref{sec:quintic} we give $n\times n$ determinantal representations of bivariate
polynomials of degrees $2,3,4$, and $5$, respectively. We show in Section~\ref{sec:sextic} that this approach can not be applied to polynomials of degree $6$. In Section~\ref{sec:numalg} we
join the methods from the previous sections in an algorithm for
determinantal representations. Some numerical results are listed in Section~\ref{sec:numalg} and we end with conclusions.

\section{Curves in complex projective plane}\label{sec:planecurves}

Let $p\in \CC[x,y]$ be a bivariate polynomial of degree $n$.
In the language of algebraic geometry, its set of zeros $\mathcal{C}=\left\{(x,y)\in\CC^2\, :\, p(x,y)=0 \right\}$ defines an \textit{affine algebraic curve}.  By abuse of notation we often say curve $\{p(x,y)=0\}$ or even shorter curve $p$.
A \textit{determinantal representation} of $p$ or of $\mathcal{C}$ is an expression
$$p(x,y)=\det(xA+yB+C),$$
where $A,B,C$ are $m\times m$ matrices with $m\geq n$.

When $m=n$ it is natural to homogenize matrices $A,B,C$ by introducing a new variable $z$ into the determinantal representation $xA+yB+zC$. Then
$$\det(xA+yB+zC)=z^n\, p(x/z,y/z)$$
is a homogeneous polynomial, which for the sake of a shorter notation, we denote by $p(x,y,z)$. Its set of zeros
$$\mathcal{C}=\left\{(x,y,z)\in\CC\PP^2\, :\, p(x,y,z)=0 \right\}$$
defines a \textit{projective curve} in the complex projective plane. Recall that by definition
$$\CC\PP^2=\left\{(x,y,z)\in \left\{\CC^3-(0,0,0)\right\} /_{\sim} \ :\  (x,y,z)\sim\lambda (x,y,z) \mbox{ for all } 0\neq \lambda\in \CC \right\}.$$
By analogy to the affine case, we often say projective curve  $\{p(x,y,z)=0\}$ or  $p$ for the zero locus of the homogeneous polynomial $p(x,y,z)$.

Since $ (x/z,y/z,1)=(x,y,z)$ in $\CC\PP^2$ it is easy to transit between the afine and projective curves. Indeed, given a homogeneous polynomial $p(x,y,z)$, the zero locus $p(x,y,1)=0$ defines an afine curve in $\CC^2$. Conversely, a
bivariate polynomial $p(x,y)$ induces a homogeneous form $z^n\, p(x/z,y/z)=p(x,y,z)$ whose set of zeros is a projective curve.


We will extensively avail of
 B\'ezout's theorem discovered in 1765, which counts the number of points in the intersection of two plane curves with no common components.
 \begin{theorem}[B\'ezout's theorem~\cite{fischer}] For algebraic curves $\mathcal{C}_1, \mathcal{C}_2\subset \CC\PP^2$ that have no common component,
 $$\sum_{T\in \mathcal{C}_1\cap \mathcal{C}_2 } \mult_T(\mathcal{C}_1\cap \mathcal{C}_2)=
 \deg  \mathcal{C}_1 \cdot  \deg \mathcal{C}_2.$$
 \end{theorem}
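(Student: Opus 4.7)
The plan is to deduce B\'ezout's theorem from a one-variable degree count using the classical theory of resultants, in three steps: a coordinate normalization, a global degree computation of a resultant, and a local matching of multiplicities.

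First I would exploit genericity to choose homogeneous coordinates on $\CC\PP^2$ so that (i) the line $\{z=0\}$ meets neither $\mathcal{C}_1\cap\mathcal{C}_2$ nor any point where both $p_1$ and $p_2$ fail to have maximal $y$-degree, and (ii) the projection $\pi\colon [x{:}y{:}z]\mapsto [x{:}z]$ restricted to $\mathcal{C}_1\cap\mathcal{C}_2$ is injective. Since the bad coordinate changes form a proper Zariski-closed subset of $\mathrm{PGL}_3(\CC)$, such a choice exists. Under this normalization, $p_1$ and $p_2$ are (up to nonzero scalars) monic in $y$ of degrees $d_1=\deg\mathcal{C}_1$ and $d_2=\deg\mathcal{C}_2$.

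Next I would form the Sylvester resultant $R(x,z)=\mathrm{Res}_y(p_1,p_2)\in\CC[x,z]$. Standard properties give that $R$ is homogeneous of degree $d_1 d_2$, and the no-common-component hypothesis implies $R\not\equiv 0$. Moreover $R(\alpha,\beta)=0$ iff $p_1(\alpha,y,\beta)$ and $p_2(\alpha,y,\beta)$ share a root in $\CC$, that is, iff there exists $T\in\mathcal{C}_1\cap\mathcal{C}_2$ with $\pi(T)=[\alpha{:}\beta]$. By the fundamental theorem of algebra, $R$ has exactly $d_1 d_2$ roots on $\CC\PP^1$ counted with multiplicity, so it remains only to identify these multiplicities with the local intersection multiplicities.

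This identification is the technical heart of the proof and the place I expect to be the main obstacle. Concretely, I must show that the multiplicity with which a linear factor of $R$ corresponding to $\pi(T)$ appears equals $\mult_T(\mathcal{C}_1\cap\mathcal{C}_2)=\dim_{\CC}\mathcal{O}_{\CC\PP^2,T}/(p_1,p_2)$. After dehomogenizing and translating so that $T$ is the origin in $\CC^2$, I would apply the Weierstrass preparation theorem in the completed local ring $\CC[[x,y]]$ to write each $p_i$ as a unit times a distinguished polynomial in $y$ whose coefficients lie in $\CC[[x]]$; multiplicativity of the resultant then reduces the problem to the case of two Weierstrass polynomials, for which the $x$-adic order of the resultant and the colength of the ideal $(p_1,p_2)$ can both be computed by elimination and shown to agree. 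Summing these matched local contributions over all $T\in\mathcal{C}_1\cap\mathcal{C}_2$ produces the equality $\sum_T \mult_T(\mathcal{C}_1\cap\mathcal{C}_2)=d_1 d_2$.
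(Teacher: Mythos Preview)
The paper does not prove B\'ezout's theorem at all: it is quoted as background with a citation to Fischer's textbook~\cite{fischer} and then used as a black box throughout (in Lemmas~\ref{lem:reduk1} and~\ref{lem:tang} and in the discussion of pencils of conics). So there is no ``paper's own proof'' to compare against.

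That said, your outline is the standard resultant proof and is essentially correct. The three steps---generic coordinate choice so that both forms are monic in $y$ and the projection to $\CC\PP^1$ separates the intersection points, the degree-$d_1d_2$ computation for $\mathrm{Res}_y(p_1,p_2)$, and the local identification of the order of vanishing of the resultant with $\dim_\CC \mathcal{O}_{T}/(p_1,p_2)$ via Weierstrass preparation---are exactly the ingredients one finds in Fischer or in Fulton's \emph{Algebraic Curves}. You are also right that the third step is where the real work lies; the argument you sketch (reduce to Weierstrass polynomials, use multiplicativity of the resultant, compare the $x$-adic order with the colength) goes through. One small point worth tightening: in step~(i) you should also arrange that no vertical line $x=\alpha z$ meets $\mathcal{C}_1\cap\mathcal{C}_2$ in more than one point \emph{and} that the leading coefficients in $y$ of $p_1,p_2$ do not vanish at any $[\alpha{:}\beta]$ in the image of $\mathcal{C}_1\cap\mathcal{C}_2$; otherwise the local resultant computation can pick up spurious contributions from points at infinity or from colliding fibers. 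This is a genericity condition of the same flavor as the ones you already impose, so it does not change the structure of the argument.
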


The multiplicity of intersection is invariant under projective transformations. A \textit{projective transformation} is a bijection
$P: \CC\PP^2 \rightarrow \CC\PP^2$ defined as
\begin{equation}\label{eq:transt}
\left[\begin{matrix}x \cr y \cr z\end{matrix}\right] \mapsto
\left[\begin{matrix}t_{11} &t_{12} & t_{13}\vspace{3pt} \cr
t_{21}& t_{22} &t_{23} \vspace{3pt}  \cr
t_{31} & t_{32} & t_{33}\end{matrix}\right]
\left[\begin{matrix}x \cr y \cr z\end{matrix}\right]=\left[\begin{matrix}\widetilde{x} \cr \widetilde{y} \cr \widetilde{z} \end{matrix}\right].
\end{equation}
Since $(x,y,z)=\lambda (x,y,z)\in \CC\PP^2$, the above invertible $3\times 3$ matrix representing $P$ is determined up to a nonzero scalar. Another name for a projective transformation is a \textit{change of coordinates}. We will show in the next paragraph how projective transformations yield classification of conics.

A nice example of the interplay between curves and linear algebra is the representation of conics with symmetric quadratic forms,
which we will use in the following sections.
We can write each quadratic bivariate polynomial in the homogeneous form
\begin{equation}\label{eq:pkvad2}
p_2(x,y,z)=a_{00}z^2+a_{10}xz+a_{01}yz+a_{20}x^2+a_{11}xy+a_{02}y^2
\end{equation}
as a symmetric quadratic form
\begin{equation}\label{eq:kvformgeneral}
p_2(x,y,z)=\left[\begin{matrix}x & y & z\end{matrix}\right]
\left[\begin{matrix}a_{20} & {1\over 2}a_{11} & {1\over 2}a_{10}\vspace{3pt} \cr
{1\over 2}a_{11} & a_{02} & {1\over 2}a_{01}\vspace{3pt}  \cr
{1\over 2}a_{10} & {1\over 2}a_{01} & a_{00}\end{matrix}\right]
\left[\begin{matrix}x \cr y \cr z\end{matrix}\right].
\end{equation}
The polynomial $p_2$ is decomposable if and only if the corresponding $3\times 3$
symmetric matrix in quadratic form \eqref{eq:kvformgeneral} is degenerate. Clearly, when $p_2$ is decomposable, it equals to a product of two linear forms, thus its zero locus is a union of two lines or a double line.
Over $\CC$ there exsists an invertible matrix $P$ such that
$$ P^{T}\cdot\! \left[\begin{matrix}a_{20} & {1\over 2}a_{11} & {1\over 2}a_{10}\vspace{3pt} \cr
{1\over 2}a_{11} & a_{02} & {1\over 2}a_{01}\vspace{3pt}  \cr
{1\over 2}a_{10} & {1\over 2}a_{01} & a_{00}\end{matrix}\right]\! \cdot P \mbox{ is one of the following }\
\Id,\
 \left[\begin{matrix}1 & 0 &0 \vspace{3pt}  \cr
0 &1 &0\vspace{3pt}  \cr
0& 0 & 0 \end{matrix}\right]
\mbox{ or }
 \left[\begin{matrix}1 & 0 &0 \vspace{3pt}  \cr
0 &0 &0\vspace{3pt}  \cr
0& 0 & 0 \end{matrix}\right].
  $$
  Then $P^{-1}$ defines a projective transformation and $p_2$ becomes in the changed coordinates either
 $\widetilde{x}^2+\widetilde{y}^2+\widetilde{z}^2,\ \widetilde{x}^2+\widetilde{y}^2=(\widetilde{x}+i\widetilde{y})(\widetilde{x}-i\widetilde{y})$ or $\widetilde{x}^2$.  Indeed,
$$p_2(x,y,z)=\left[\begin{matrix}x & y & z\end{matrix}\right]\!
\left[\begin{matrix}a_{20} & {1\over 2}a_{11} & {1\over 2}a_{10}\vspace{3pt} \cr
{1\over 2}a_{11} & a_{02} & {1\over 2}a_{01}\vspace{3pt}  \cr
{1\over 2}a_{10} & {1\over 2}a_{01} & a_{00}\end{matrix}\right]\!
\left[\begin{matrix}x \cr y \cr z\end{matrix}\right]=\left[\begin{matrix} \widetilde{x} & \widetilde{y} & \widetilde{z} \end{matrix}\right]P^T\! \cdot\!
\left[\begin{matrix}a_{20} & {1\over 2}a_{11} & {1\over 2}a_{10}\vspace{3pt} \cr
{1\over 2}a_{11} & a_{02} & {1\over 2}a_{01}\vspace{3pt}  \cr
{1\over 2}a_{10} & {1\over 2}a_{01} & a_{00}\end{matrix}\right]\! \cdot P\!
\left[\begin{matrix}\widetilde{x} \cr \widetilde{y} \cr \widetilde{z} \end{matrix}\right].$$
Note that we could have chosen such $P$ that reduces the symmetric  quadratic form to either
$\widetilde{y}^2-\widetilde{x}\widetilde{z},\ \widetilde{x}\widetilde{y}$ or $\widetilde{x}^2$ defined with matrices
$$\left[\begin{matrix}0 & 0 &-{1\over 2} \vspace{3pt}  \cr
0 &1 &0 \vspace{3pt}  \cr
-{1\over 2} & 0 & 0 \end{matrix}\right] ,\ \
 \left[\begin{matrix}0 & 1 &0 \vspace{3pt}  \cr
1 &0 &0\vspace{3pt}  \cr
0& 0 & 0 \end{matrix}\right] \
\mbox{ or }\
 \left[\begin{matrix}1 & 0 &0 \vspace{3pt}  \cr
0 &0 &0\vspace{3pt}  \cr
0& 0 & 0 \end{matrix}\right]\ \mbox{ respectively}.
$$
Let us recall how the above can be applied to the geometry of  pencils of conics~\cite{reid},~\cite{shafarevich} .
Pick $p_2(x,y,z)$ as above and another conic $q_2(x,y,z)=b_{00}z^2+b_{10}xz+b_{01}yz+b_{20}x^2+b_{11}xy+b_{02}y^2$.  When $p_2$ and $q_2$ have no common components, by B\'ezout's theorem the intersection $\{ (x,y,z)\,:\, p_2(x,y,z)=q_2(x,y,z)=0 \}$ consists of 4 points, counted with multiplicities. Generically, these 4 points are distinct, in which case no three are collinear.
For $(s,t)\in \PP^1$ consider the pencil of conics defined by
\begin{equation}\label{eq:pencil}
s\, p_2(x,y,z)+t\, q_2(x,y,z)=0.\end{equation}
Assume that there are 4 distinct points $T_i=(\lambda_i,\mu_i,\nu_i)$ for $i=1,2,3,4$ in the intersection of $p_2$ and $q_2$.
  Every conic of the pencil~\eqref{eq:pencil}  passes through these four points $T_1,T_2,T_3$, and $T_4$ as shown left  in Figure~\ref{BorPict56}.
It follows that
\begin{equation}\label{eq:demugeneral}
\det\left(
s \left[\begin{matrix}
2a_{20} & a_{11} & a_{10}\cr
a_{11} & 2a_{02} & a_{01}\cr
a_{10} & a_{01} & 2a_{00}\end{matrix}\right]
+t
\left[\begin{matrix}
2b_{00} & b_{11} & b_{10}\cr
b_{11} & 2b_{02} & b_{01}\cr
b_{10} & b_{01} & 2b_{00}\end{matrix}\right]
\right)
\end{equation}
is a homogeneous cubic polynomial in  $s,t$ that equals 0
for exactly three choices of $(s,t)$. For these $(s,t)$ the conic~\eqref{eq:pencil} degenerates into the pairs of lines
$\mathcal{L}(T_1,T_2)\cup \mathcal{L}(T_3,T_4),\
\mathcal{L}(T_1,T_3)\cup \mathcal{L}(T_2,T_4)$ or
$\mathcal{L}(T_1,T_4)\cup \mathcal{L}(T_2,T_3)$.
Here $\mathcal{L}(T_i,T_j)=\{\ell_{ij}(x,y,z)=0\}$
denotes a line through $T_i$ and $T_j$, where
$$\ell_{ij}(x,y,z)=(\mu_i \nu_j-\mu_j \nu_i )x+( \lambda_j \nu_i - \lambda_i \nu_j)y+( \lambda_i \mu_j -\lambda_j \mu_i )z.$$

Conversly, given a $4-$tuple of points $T_1,T_2,T_3,T_4$ such that no three are collinear,
two of the above pairs of lines define
the whole pencil of conics through these 4 points.
For example,
$$
s' \ell_{12}(x,y,z)\ell_{34}(x,y,z)+t'\ell_{13}(x,y,z)\ell_{24}(x,y,z)
$$
 defines the same pencil as equation~\eqref{eq:pencil}.

\begin{figure}
\begin{center}
\includegraphics[width=7cm]{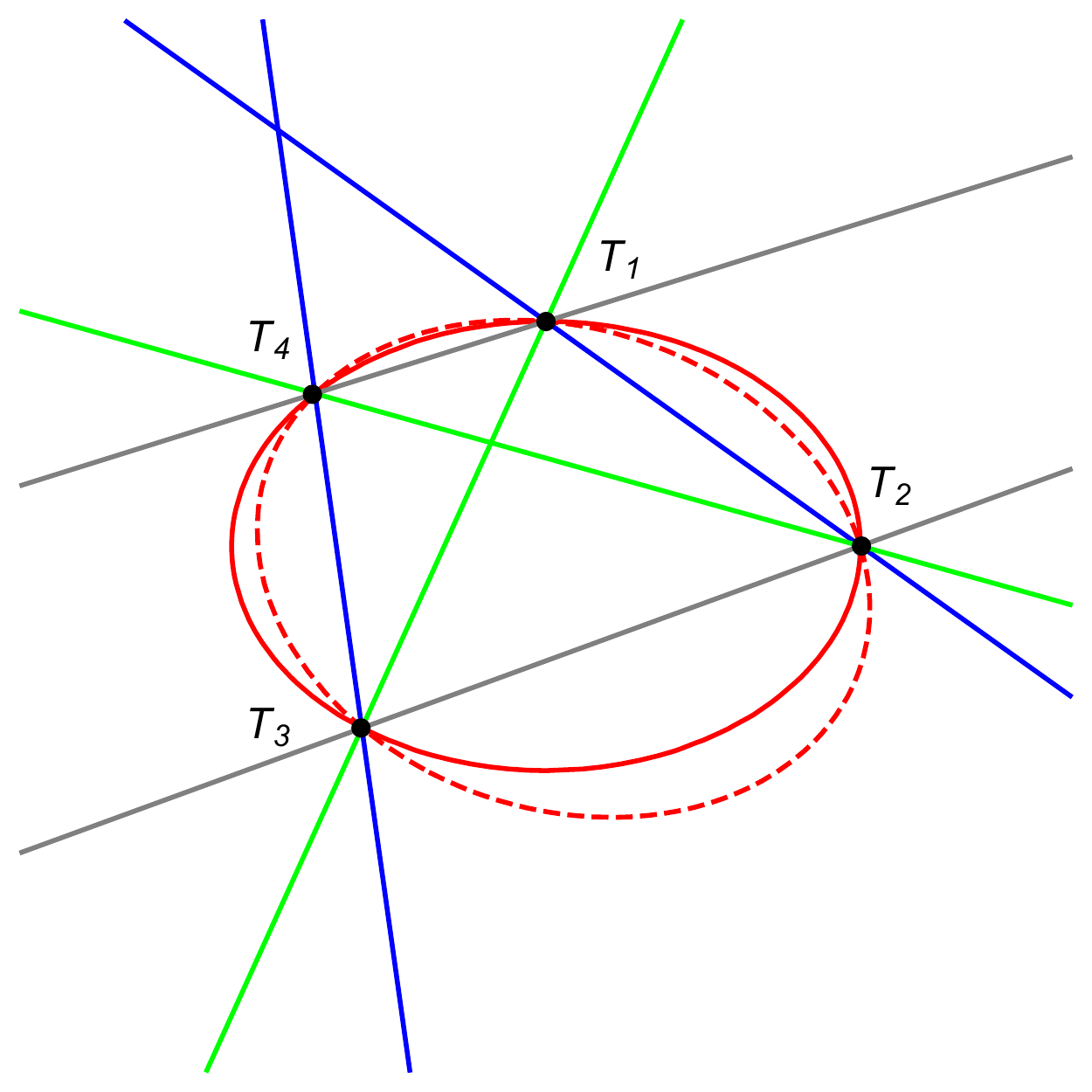}
\includegraphics[width=7cm]{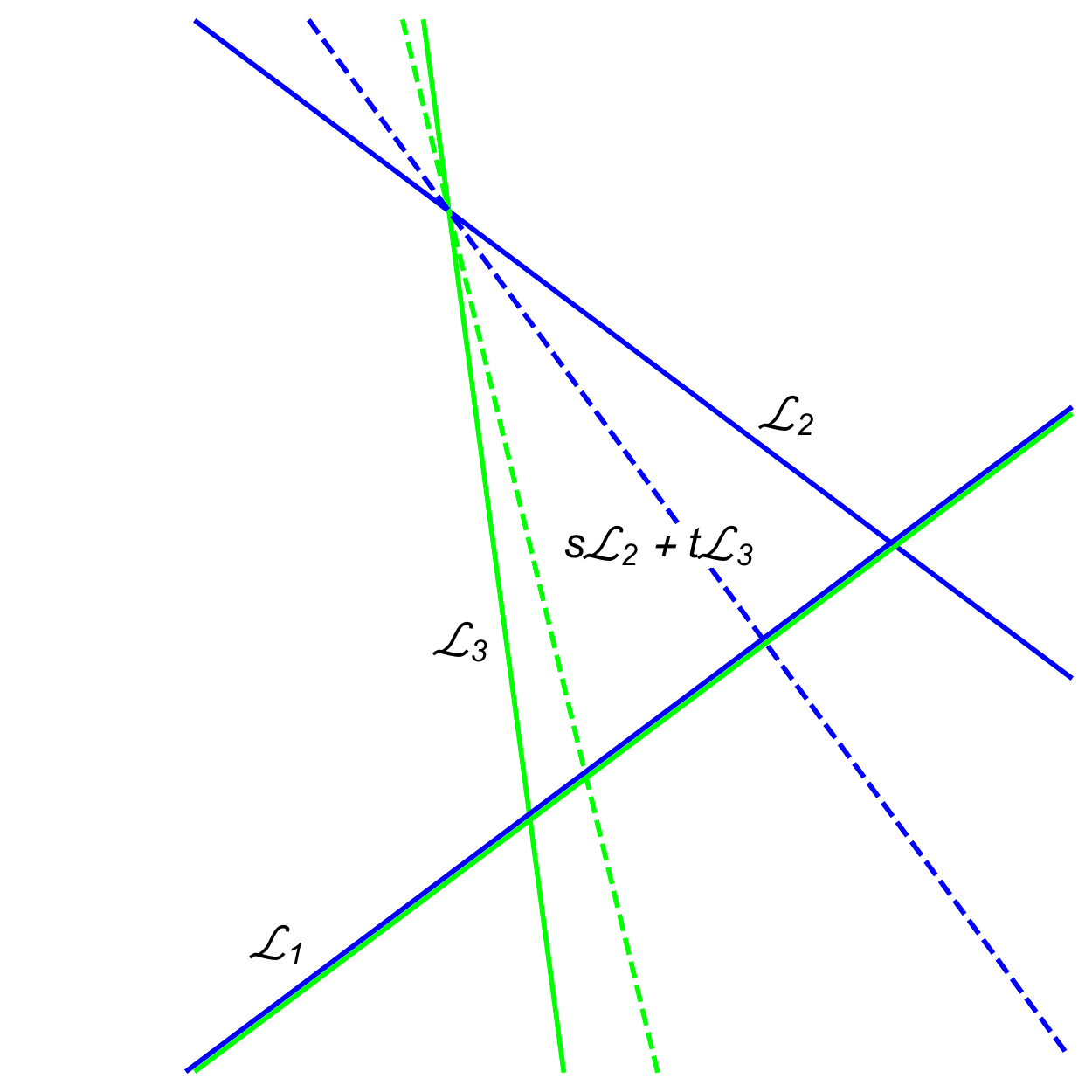}
\end{center}
\caption{A pencil of quadrics through $T_1,T_2,T_3,T_4$. A degenerate pencil of quadrics containing line $\mathcal{L}_1$. \label{BorPict56}}
\end{figure}

In Section~\ref{sec:quartic} we will come across two extremal  pencils of conics, a degenerate pencil in which all conics degenerate and a pencil with only one degenerate member.
The pencil of conics $s\, p_2(x,y,z)+t\, q_2(x,y,z)=0$ is \textit{degenerate} if the determinant~\eqref{eq:demugeneral}  is identically zero. This means that all the quadrics in the pencil are degenerate. On the other hand, for a pencil with only one degenerate conic the determinant of the corresponding quadratic form~\eqref{eq:demugeneral} equals $(\alpha\, s+\beta\, t)^3\neq 0$ for some $\alpha, \beta\in \CC$.
\medskip

\begin{lemma}\label{lem:degpencil}
Consider the
pencil of conics $s\, p_2(x,y,z)+t\, q_2(x,y,z)=0$ defined by  degenerate conics
\begin{align*}
p_2(x,y,z)&=(\alpha_lx+\beta_1y+\gamma_1z)(\alpha_2x+\beta_2y+\gamma_2z),\\
q_2(x,y,z)&=(\alpha_3x+\beta_3y+\gamma_3z)(\alpha_4x+\beta_4y+\gamma_4z).
\end{align*}
The pencil
is degenerate if and only if
 $p_2$ and $q_2$ either have a common factor or all
 the lines $\alpha_i x+\beta_i y + \gamma_i z =0$ for $i=1,2,3,4$ intersect in one point.
\end{lemma}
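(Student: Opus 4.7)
The plan is to prove both implications, with $(\Leftarrow)$ routine and $(\Rightarrow)$ doing the real work.

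For $(\Leftarrow)$, if $p_2, q_2$ share a linear factor $\ell$ then $sp_2+tq_2=\ell\cdot(s\ell'+t\ell'')$ is reducible for every $(s,t)$. If instead all four lines share a point $P$, then $P$ lies on both $\ell_1$ and $\ell_2$ and hence is a singular point of $p_2=\ell_1\ell_2$ (its gradient vanishes at $P$); analogously $P$ is singular on $q_2$. Therefore $P$ is a singular point of $sp_2+tq_2$ for every $(s,t)$, so every conic in the pencil is degenerate.

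For $(\Rightarrow)$, let $M_p,M_q$ denote the symmetric $3\times 3$ matrices representing $p_2,q_2$ as in~\eqref{eq:kvformgeneral}. Assume the pencil is degenerate and that $p_2,q_2$ share no common factor; I will deduce concurrency of the four lines. Since both matrices have rank at most $2$, $\det M_p=\det M_q=0$, and the expansion
$$\det(sM_p+tM_q)=s^3\det M_p + s^2t\,\text{tr}(\text{adj}(M_p)M_q) + st^2\,\text{tr}(\text{adj}(M_q)M_p) + t^3\det M_q$$
reduces the pencil's degeneracy to the vanishing of the two middle traces.

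Assume first $\ell_1\neq\ell_2$ and $\ell_3\neq\ell_4$, so both matrices have rank exactly $2$. Then $\text{adj}(M_p)=c_p\,n_p n_p^{T}$ for a nonzero scalar $c_p$, where $n_p$ is the null vector of $M_p$, i.e.\ the coordinate vector of the singular point $\ell_1\cap\ell_2$; analogously $n_q=\ell_3\cap\ell_4$. The two trace conditions become $q_2(n_p)=n_p^{T}M_q n_p=0$ and $p_2(n_q)=0$, which say geometrically that $n_p\in\ell_3\cup\ell_4$ and $n_q\in\ell_1\cup\ell_2$. Up to swapping labels within each pair we may take $n_p\in\ell_3$ and $n_q\in\ell_1$, so $n_p,n_q\in\ell_1\cap\ell_3$; if they were distinct, two common points would force $\ell_1=\ell_3$, contradicting no common factor, so $n_p=n_q$ is a single point lying on all four lines. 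The double-line cases (say $p_2=\ell_1^2$) are easier: the corresponding adjugate is zero, so one trace condition is automatic, and the other reduces to $\ell_1(n_q)^2=0$, forcing $\ell_1$ through $\ell_3\cap\ell_4$; if both conics are double lines the pencil always factors over $\CC$ and $\ell_1,\ell_3$ meet in a point.

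The main obstacle I anticipate is simply the bookkeeping of the subcases in $(\Rightarrow)$, but the rank-$1$ structure of $\text{adj}(M_p)$ collapses each polynomial condition to a single geometric incidence, and from there the no-common-factor hypothesis quickly forces the single intersection $n_p=n_q$ that witnesses concurrency.
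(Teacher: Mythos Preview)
Your proof is correct and takes a genuinely different route from the paper's. The paper proceeds by brute force: after a projective change of coordinates it reduces $q_2$ to one of the canonical forms $xy$ or $x^2$, then for each case it writes out $\det(sM_p+tM_q)$ explicitly and solves for which shapes of $M_p$ make this polynomial in $(s,t)$ vanish identically, reading off the factorizations of $p_2$ in each subcase. Your approach avoids any choice of coordinates: the cubic expansion of $\det(sM_p+tM_q)$ together with the rank-one structure $\mathrm{adj}(M_p)=c_p\,n_p n_p^{T}$ (valid when $M_p$ has rank $2$) immediately converts the two mixed coefficients into the geometric incidences $q_2(n_p)=0$ and $p_2(n_q)=0$, and the no-common-factor hypothesis then pins down $n_p=n_q$ in one line. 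What your method buys is conceptual clarity and coordinate-independence, and it also makes the $(\Leftarrow)$ direction explicit (the paper leaves it implicit in the case-by-case classification); what the paper's computation buys is that it simultaneously exhibits the concrete forms of $p_2$ in each degenerate situation, which is convenient if one later needs those normal forms. Both arguments handle the rank-one (double line) subcases in essentially the same spirit, yours via $\mathrm{adj}(M_p)=0$ and the paper via the $x^2$ canonical form.
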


\begin{proof} We can always apply a change of variables
so that $q_2$ equals either $xy$ or $x^2$.
Firstly, when $q_2(x,y,z)=xy$,
\begin{equation}\label{eq:demuxy}
\det\left(
s \left[\begin{matrix}
2a_{20} & a_{11} & a_{10}\cr
a_{11} & 2a_{02} & a_{01}\cr
a_{10} & a_{01} & 2a_{00}\end{matrix}\right]
+t
\left[\begin{matrix}
0&1 & 0\cr
1& 0 &0\cr
0& 0 & 0\end{matrix}\right]
\right)=0
\end{equation}
for all $(s,t)\in\PP^1$ if  and only if the left matrix in~\eqref{eq:demuxy} is one of the following
$$ \left[\begin{matrix}
2a_{20} & a_{11} & 0\cr
a_{11} & 2a_{02} &0\cr
0 & 0 &0\end{matrix}\right],\quad  \left[\begin{matrix}
2a_{20} & a_{11} & a_{10}\cr
a_{11} & 0 & 0\cr
a_{10} & 0 & 0\end{matrix}\right] \quad \mbox{or}\quad  \left[\begin{matrix}
0 & a_{11} & 0\cr
a_{11} & 2a_{02} & a_{01}\cr
0 & a_{01} & 0\end{matrix}\right].$$
The corresponding quadratic form $p_2$ is then respectively
$$a_{20} x^2+a_{11}xy+a_{02}y^2,\ \  x(a_{20} x+a_{11}y+a_{10}z)\ \mbox{ or }\
y(a_{11} x+a_{02}y+a_{01}z).$$
Secondly, when  $q_2(x,y,z)=x^2$,
\begin{equation}\label{eq:demuxx}
\det\left(
s \left[\begin{matrix}
2a_{20} & a_{11} & a_{10}\cr
a_{11} & 2a_{02} & a_{01}\cr
a_{10} & a_{01} & 2a_{00}\end{matrix}\right]
+t
\left[\begin{matrix}
2&0 & 0\cr
0& 0 &0\cr
0& 0 & 0\end{matrix}\right]
\right)=0
\end{equation}
for all $(s,t)\in\PP^1$ if  and only if the left matrix in~\eqref{eq:demuxx} is one of the following
$$ \left[\begin{matrix}
2a_{20} & 0 & a_{10}\cr
0 & 0 &0\cr
 a_{10} & 0 & 2 a_{00} \end{matrix}\right],\quad \left[\begin{matrix}
2a_{20} & a_{11} & a_{10}\cr
a_{11} & 0 & 0\cr
a_{10} & 0 & 0\end{matrix}\right] \quad \mbox{or}\quad
 \left[\begin{matrix}
2a_{20} & a_{11} & a_{11}a_{01}/(2 a_{02})\cr
a_{11} & 2a_{02} & a_{01}\cr
 a_{11} a_{01}/(2 a_{02}) & a_{01} & 2a_{00}\end{matrix}\right],$$
where $a_{02}\neq 0$ and $a_{01}^2 -4 a_{02} a_{00}=0$.
The corresponding $p_2$ is respectively
$$a_{20} x^2+a_{10}x z+a_{00}z^2,\quad  x(a_{20} x+a_{11}y+a_{10}z),$$
or
$$
a_{02} \left( y +  \frac{
 a_{11} + \sqrt{a_{11}^2 - 4 a_{20} a_{02}}}{2 a_{02}}  x +  \frac{a_{01}}{2 a_{02}} z\right) \left(y + \frac{
 a_{11} - \sqrt{a_{11}^2 - 4 a_{20} a_{02}}}{2 a_{02}}   x +  \frac{a_{01}}{2 a_{02}} z\right).
$$
\end{proof}

\begin{lemma} \label{lem:extrpencil} Let
$s\, p_2(x,y,z)+t\, q_2(x,y,z)=0$ be a pencil of conics with
 $$q_2(x,y,z)=(\alpha_3x+\beta_3y+\gamma_3z)(\alpha_4x+\beta_4y+\gamma_4z).$$
 Then $q_2$ is the only degenerate conic in the pencil if and only if one of the lines
 $\alpha_3 x+\beta_3 y + \gamma_3 z =0$ or $\alpha_4 x+\beta_4 y + \gamma_4 z =0$ is tangent to $p_2$ at their intersection point
$$\{\alpha_3x+\beta_3y+\gamma_3z=0\} \cap \{\alpha_4x+\beta_4y+\gamma_4z=0\}$$ as shown on Figure~\ref{BorPict78}.
\end{lemma}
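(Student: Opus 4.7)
I argue by projective invariance. Both conditions in the lemma--``$q_2$ is the only degenerate conic of the pencil'' and ``one of $\ell_3,\ell_4$ is tangent to $p_2$ at $\ell_3\cap\ell_4$''--are preserved under the projective transformation~\eqref{eq:transt}, so I may first normalize $q_2$ as in the proof of Lemma~\ref{lem:degpencil}. There are two subcases: (a) the two lines are distinct, and I take $q_2=xy$ with $\ell_3\cap\ell_4=[0{:}0{:}1]$; (b) the lines coincide, and I take $q_2=x^2$. In both subcases I write $p_2$ in the generic form~\eqref{eq:pkvad2} and expand the cubic determinant~\eqref{eq:demugeneral} in powers of $t$, then translate the algebraic vanishing conditions into the geometric statement.

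\textbf{Case (a): $q_2=xy$.} Here $q_2$ corresponds to $(s,t)=(0,1)$, so being the unique degenerate member of the pencil amounts to the cubic $\det(sA+tB)$ being a nonzero scalar multiple of $s^3$. A direct expansion of the $3\times 3$ determinant gives
\begin{equation*}
\det(sA+tB)\;=\;-2a_{00}\,s\,t^{2}\;+\;2(a_{01}a_{10}-2a_{00}a_{11})\,s^{2}\,t\;+\;c(a_{ij})\,s^{3},
\end{equation*}
where $c(a_{ij})=2\bigl(4a_{20}a_{02}a_{00}-a_{20}a_{01}^{2}-a_{02}a_{10}^{2}-a_{00}a_{11}^{2}+a_{01}a_{10}a_{11}\bigr)$. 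Forcing the $t^{2}$ and $t$ coefficients to vanish yields $a_{00}=0$ and $a_{01}a_{10}=0$. The first says that $P=[0{:}0{:}1]$ lies on $p_2$; given $a_{00}=0$, the gradient of $p_2$ at $P$ is $(a_{10},a_{01},0)$, so the tangent line to $p_2$ at $P$ is $a_{10}x+a_{01}y=0$. The second condition $a_{01}a_{10}=0$ (combined with nondegeneracy of $p_2$, which is necessary since $p_2$ lies in the pencil) then says this tangent equals either $\ell_3=\{x=0\}$ or $\ell_4=\{y=0\}$; the leftover factor $c(a_{ij})$ is automatically nonzero in this situation. Reversing the computation gives the converse.

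\textbf{Case (b): $q_2=x^2$.} The same expansion, now with $B$ the matrix of $2x^{2}$, produces
\begin{equation*}
\det(sA+tB)\;=\;2(4a_{02}a_{00}-a_{01}^{2})\,s^{2}\,t\;+\;c'(a_{ij})\,s^{3},
\end{equation*}
with no higher powers of $t$ since the last two rows and columns of $B$ vanish. Uniqueness of the degenerate member $q_2$ is therefore equivalent to $a_{01}^{2}-4a_{02}a_{00}=0$, which is precisely the discriminant condition that the restriction $p_2|_{x=0}=a_{02}y^{2}+a_{01}yz+a_{00}z^{2}$ have a double root, i.e.\ that the line $\{x=0\}=\ell_3=\ell_4$ be tangent to $p_2$ at its unique point of contact (which is the appropriate interpretation of $\ell_3\cap\ell_4$ when the two lines coincide). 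The main obstacle in the proof is precisely this interpretation in the double-line case; everything else is a routine $3\times 3$ determinant calculation, made tractable by the normalization afforded by Lemma~\ref{lem:degpencil}.
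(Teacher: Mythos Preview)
Your proof is correct and follows essentially the same approach as the paper: normalize $q_2$ to $xy$ or $x^2$ via a projective change of coordinates, expand the cubic $\det(sA+tB)$, and read off the vanishing conditions on the coefficients of $t$ and $t^2$. The only cosmetic difference is that the paper lists the admissible matrix shapes for $A$ explicitly and then writes out the corresponding $p_2$ and its tangent point, whereas you go directly from the coefficient conditions $a_{00}=0$, $a_{01}a_{10}=0$ (resp.\ $a_{01}^2-4a_{02}a_{00}=0$) to the tangency statement via the gradient (resp.\ the discriminant of $p_2|_{x=0}$); the nonvanishing of the $s^3$ coefficient, which you leave implicit in case~(b), is exactly the nondegeneracy of $p_2$ that you already invoked in case~(a).
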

\begin{proof} As in the proof of Lemma~\ref{lem:degpencil},
we first assume that $q_2(x,y,z)=x y$.
When
\begin{equation}\label{eq:demuisoxy}
\det\left(
s \left[\begin{matrix}
2a_{20} & a_{11} & a_{10}\cr
a_{11} & 2a_{02} & a_{01}\cr
a_{10} & a_{01} & 2a_{00}\end{matrix}\right]
+t
\left[\begin{matrix}
0&1 & 0\cr
1& 0 &0\cr
0& 0 & 0\end{matrix}\right]
\right)=\kappa s^3 \mbox{ for some }\kappa\neq 0,
\end{equation}
 the left matrix in~\eqref{eq:demuisoxy} needs to be
 $$ \left[\begin{matrix}
2a_{20} & a_{11} &0\cr
a_{11} & 2a_{02} & a_{01}\cr
0 & a_{01} &0\end{matrix}\right]\quad\mbox{or}\quad
\left[\begin{matrix}
2a_{20} & a_{11} & a_{10}\cr
a_{11} & 2a_{02} & 0\cr
a_{10} & 0 & 0\end{matrix}\right].$$
Then~\eqref{eq:demuisoxy} equals  $-2 a_{20}  a_{01}^2\, s^3$ or $-2  a_{02} a_{10}^2\,  s^3$,
and the corresponding quadratic form $p_2$ is either
$$ a_{20} x^2 + a_{11} x y + a_{02} y^2 + a_{01} y z\ \ \mbox{ or }\ \ a_{20} x^2 + a_{11} x y + a_{02} y^2 + a_{10} x z. $$
 In both cases $p_2$ is indecomposable and its zero locus is an irreducible conic with the tangent at $(0,0,1)$  being $\{y=0\}$ or $\{x=0\}$ respectively.

Next we consider $q_2(x,y,z)=x^2$. Then
\begin{equation}\label{eq:demuisoxx}
\det\left(
s \left[\begin{matrix}
2a_{20} & a_{11} & a_{10}\cr
a_{11} & 2a_{02} & a_{01}\cr
a_{10} & a_{01} & 2a_{00}\end{matrix}\right]
+t
\left[\begin{matrix}
1&0 & 0\cr
0& 0 &0\cr
0& 0 & 0\end{matrix}\right]
\right)=\kappa s^3 \mbox{ for some }\kappa\neq 0
\end{equation}
if and only if $a_{01}^2-4a_{02}a_{00}=0$. The left matrix in~\eqref{eq:demuisoxx} then equals either
$$ \left[\begin{matrix}
2a_{20} & a_{11} & a_{10}\cr
a_{11} & 0 &0 \cr
a_{10} & 0 & 2a_{00}\end{matrix}\right]\quad\mbox{or}\quad
\left[\begin{matrix}
2a_{20} & a_{11} & a_{10}\cr
a_{11} & 2a_{02} & a_{01}\cr
a_{10} & a_{01} & a_{01}^2/(2a_{02})\end{matrix}\right]$$
and the determinant~\eqref{eq:demuisoxx} is either
 $-2 a_{00}  a_{11}^2\, s^3$ or $\frac{( a_{11} a_{01}- 2 a_{10} a_{02})^2}{4  a_{02}}\, s^3$.
 In the first case  the corresponding irreducible conic $p_2$ has equation
 $ a_{20} x^2 +   a_{11} x y +   a_{10} x z +   a_{00} z^2$
 with the tangent $\{x=0\}$ at $(0,1,0)$, and in the second case $p_2$ has equation
 $  a_{20} x^2 +   a_{11} x y +   a_{02} y^2 +   a_{10} x z +   a_{01} y z +\frac{
  a_{01}^2 z^2}{4  a_{02}}$ with the  tangent $\{x=0\}$ at $(0,a_{01},-2 a_{02})$.
\begin{figure}
\begin{center}
\includegraphics[width=7cm]{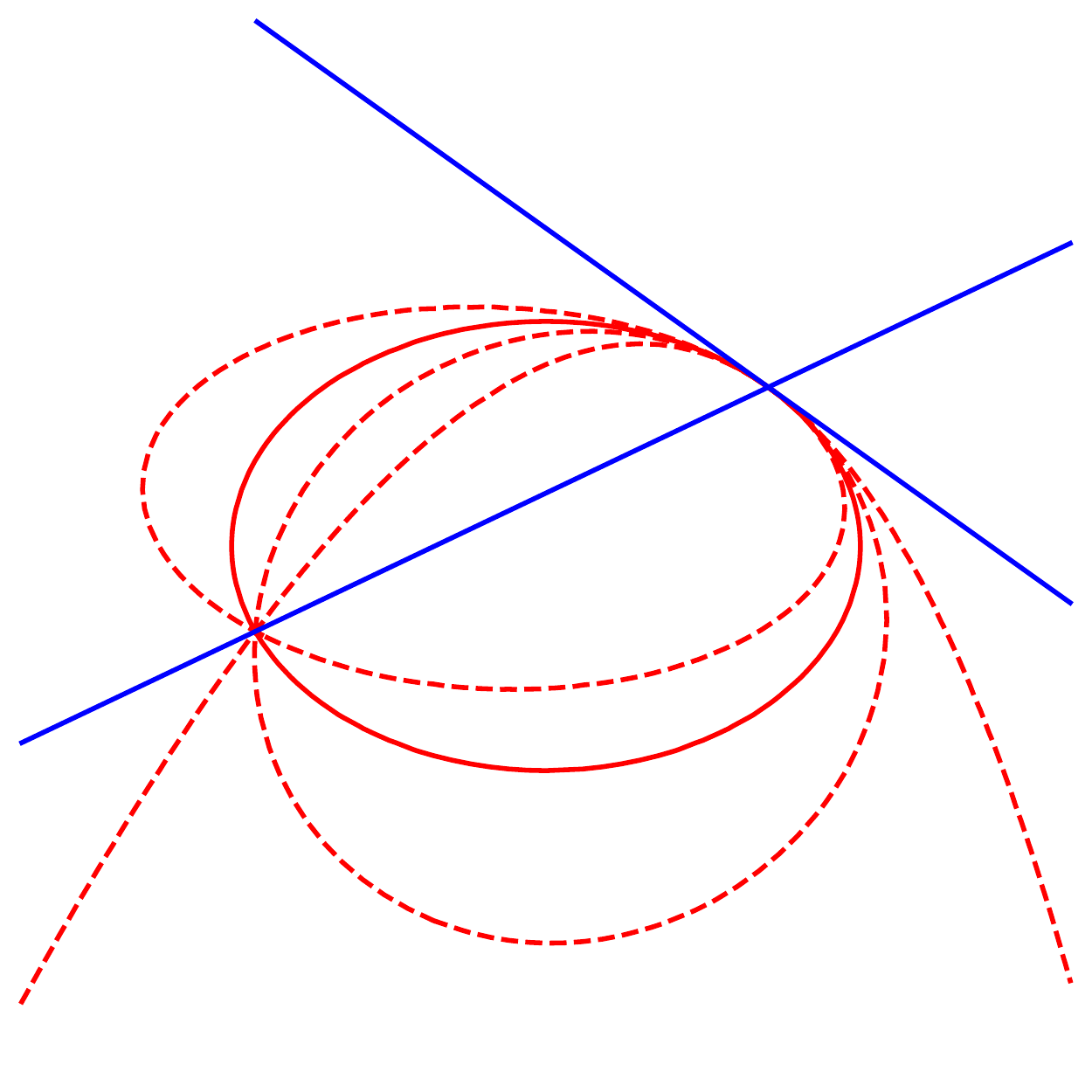}
\includegraphics[width=7cm]{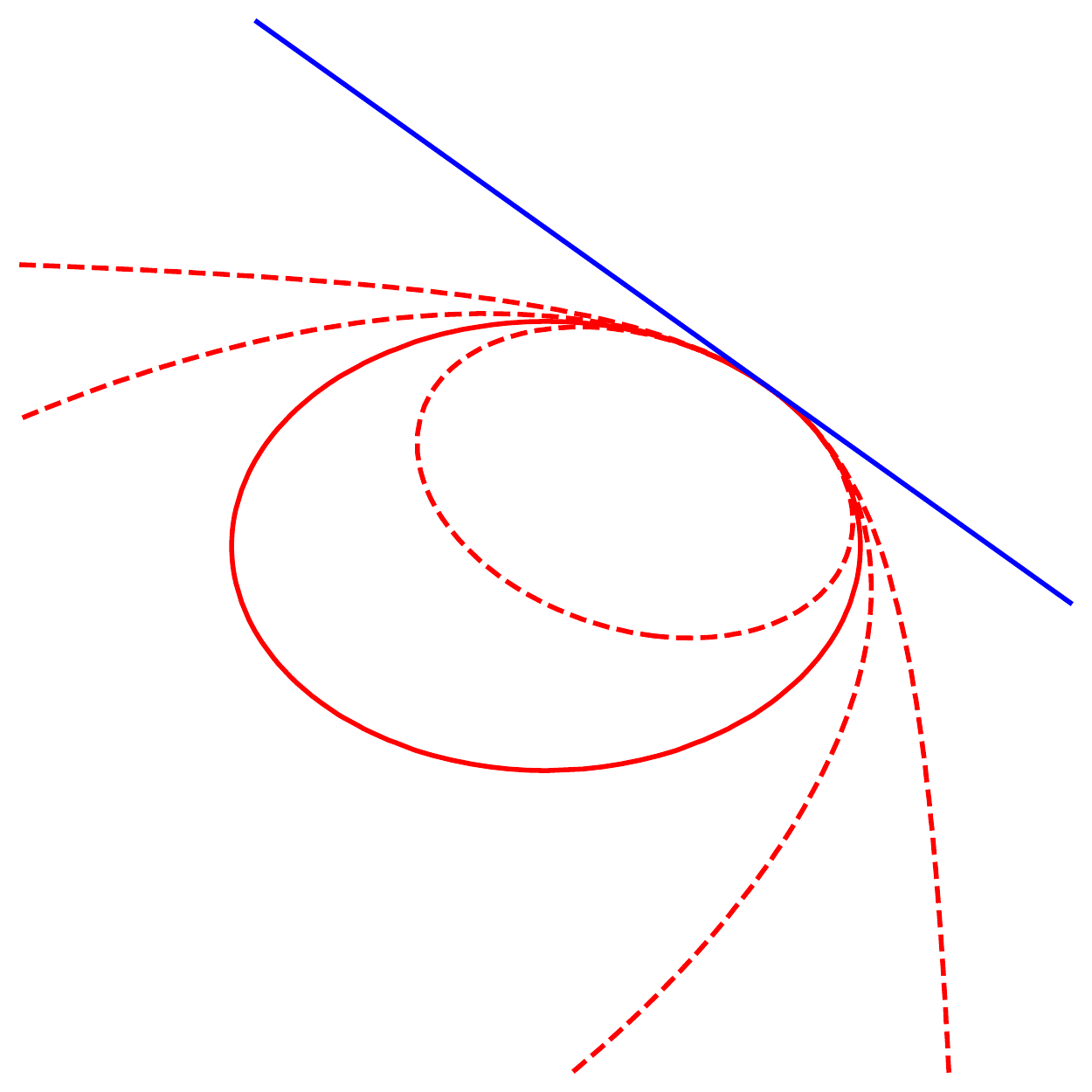}
\end{center}
\caption{Pencils with only one degenerate conic:
$q_2=(\alpha_3x+\beta_3y+\gamma_3z)(\alpha_4x+\beta_4y+\gamma_4z)$ and
$q_2=(\alpha_3x+\beta_3y+\gamma_3z)^2$.   \label{BorPict78}}
\end{figure}
\end{proof}

Suppose that we have $(s,t)$ for which the determinant~\eqref{eq:demugeneral}  equals $0$.
This means that the corresponding quadratic form is decomposable  and we would like to extract its linear factors. The following is an algorithm for this purpose.

\noindent\rule{\textwidth}{1pt}
\noindent {\bf Algorithm 1.} For a quadratic bivariate polynomial $p_2$ in the homogeneous form
\eqref{eq:pkvad2} such that the
corresponding $3\times 3$
symmetric matrix in the quadratic form \eqref{eq:kvformgeneral} is singular,
the algorithm returns linear homogeneous polynomials
$\ell_i(x,y,z)=r_i x + s_i y + t_i z$ for $i=1,2$ such that
$
p_2(x,y,z)=\ell_1(x,y,z)\ell_2(x,y,z).
$
\\[-0.5em]
\noindent\rule{\textwidth}{0.5pt}
\begin{enumerate}
\item[(1)] If $(a_{00},a_{20},a_{02})\ne (0,0,0)$ then
\begin{enumerate}
\item[a)] If $a_{20}\ne 0$, apply a permutation of variables $x,y,z$ such that $a_{20}$ becomes nonzero.
\item[b)] Compute the roots $\alpha_1,\alpha_2$ of $p_2(\alpha,1,0)=a_{02}+a_{11}\alpha + a_{20}\alpha^2=0$
and the roots $\beta_1,\beta_2$  of
$p_2(\beta,0,1)=a_{00}+a_{10}\beta + a_{20}\beta^2=0.$
\item[b)] If $|a_{01}+a_{20}(\alpha_1\beta_1+ \alpha_2\beta_2)|<
|a_{01}+a_{20}(\alpha_1\beta_2+ \alpha_2\beta_1)|,$
exchange $\beta_2$ and $\beta_1$.
\item[d)] Set $\ell_1(x,y,z)=a_{20}(x-\alpha_1 y-\beta_1 z)$ and
$\ell_2(x,y,z)=x-\alpha_2 y-\beta_2 z$.
\end{enumerate}
\item[(2)]  Else
\begin{enumerate}
\item[a)] If $a_{10}\ne 0$, apply a permutation of variables $x,y,z$ such that $a_{10}$ becomes zero.
\item[b)] Set $\ell_1(x,y,z)=y$ and $\ell_2(x,y,z)=a_{11}x+a_{01}z$.
\end{enumerate}
\item[(3)] If a permutation was applied in (1a) or (2b), permute back the variables in $\ell_1$ and $\ell_2$.
\end{enumerate}\vspace{-0.5em}
\noindent\rule{\textwidth}{0.5pt}

Some comments:
\begin{itemize}
\item If $a_{00}=a_{20}=a_{02}=0$, then polynomial $p_2$ has the form
$$ a_{10}xz+a_{01}yz+a_{11}xy= {1\over 2}
\left[\begin{matrix}x & y & z\end{matrix}\right]
\left[\begin{matrix}0 &a_{11} &a_{10}\vspace{3pt} \cr
a_{11} & 0& a_{01}\vspace{3pt}  \cr
a_{10} & a_{01} & 0 \end{matrix}\right]
\left[\begin{matrix}x \cr y \cr z\end{matrix}\right].
$$
The matrix of the above symmetric form is clearly singular if and only if
$a_{11}a_{01}a_{10}=0$. Therefore, we can always find a permutation of variables
in Step (2a) that makes $a_{10}=0$.
\item The obtained decomposition is not unique as we can always
respectively replace $(r_1,s_1,t_1)$ and $(r_2,s_2,t_2)$
by
$(\lambda r_1,\lambda s_1,\lambda t_1)$ and
$(r_2/\lambda,s_2/\lambda,t_2/\lambda)$
for a nonzero $\lambda$.
\item Polynomial $p_2$ is decomposable if and only if the rank of the symmetric matrix
   in \eqref{eq:kvformgeneral}  is $1$ or $2$.
   In addition, the rank is 1 exactly when $p_2$ is a square of a linear homogeneous
   polynomial. In this case we can simply take
    $\ell_1(x,y,z)=\ell_2(x,y,z)=\root \of{a_{20}}\, x + \root \of{a_{02}}\,
y + \root \of{a_{00}}\, z$.
\end{itemize}
\bigskip

\section{Reduction}\label{sec:ncurve}

Let $p_n$ be a bivariate polynomial of degree $n$ in the homogeneous
form
\begin{equation}\label{eq:pn}
p_n(x,y,z)=a_{00}z^n+a_{10}xz^{n-1}+a_{01}yz^{n-1}+\cdots+a_{n0}x^n+\cdots+a_{0n}y^n,
\end{equation}
which means that at least one of the coefficients $a_{n0},a_{n-1,1},\ldots,a_{0n}$ is nonzero.
Its zero locus
\begin{equation}\label{eq:locus}
\mathcal{C}=\left\{ (x,y,z)\in\CC\PP^2\, : \, p_n(x,y,z)=0 \right\}
\end{equation}
 defines a projective plane curve of degree $n$.

We can assume that $a_{n0}\ne 0$. The geometric meaning of  $a_{n0}\ne 0$ is that  $(1,0,0)\notin \mathcal{C}$. If
$a_{n0}=0$, we apply a change of variables
\begin{equation}\label{eq:substit}
\left[\begin{matrix}
x \cr
y  \cr
 z
\end{matrix}\right]=\left[\begin{matrix}
 c & s &0 \cr
 -s  & c & 0 \cr
 0 &0 &1
\end{matrix}\right]\, \left[\begin{matrix}
\widetilde{x} \cr
 \widetilde{y}  \cr
 \widetilde{z}
\end{matrix}\right],
\end{equation}
 such that $c^2+s^2=1$ and that
the coefficient at $\widetilde x^n$ of the substituted
polynomial $\widetilde  p_n(\widetilde  x, \widetilde  y,\widetilde  z)$
is nonzero. Indeed, the coefficient at $\widetilde x^n$ equals
to $p_n(c,s,0)$ and we can choose $c=\cos\varphi$ and $s=\sin\varphi$ such that
$p_n(\cos\varphi,\sin\varphi,0)\ne 0$. The substitution
\eqref{eq:substit} corresponds to a rotation of the coordinates $x,y$ around $z$. Such
transformations are also used in \cite{Jonsson} due to their numerical
stability.
After we construct a determinantal representation for the substituted
polynomial in $\widetilde x,\widetilde y$ and $\widetilde z$,
we perform the substitution back to $x,y$ and $z$.
\medskip

\begin{lemma}\label{lem:reduk1}
Let $p_n$ be a bivariate polynomial of degree $n$ in the homogeneous
form \eqref{eq:pn} such that $a_{n0}\ne 0$. If $\alpha_1,\alpha_2,\ldots,\alpha_n$ are the roots of
$$p_n(\alpha,1,0)=a_{n 0} \alpha^n+a_{n-1, 1}\alpha^{n-1}  +\cdots+a_{1, n-1}\alpha+a_{0 n}$$
and
$\beta_1,\beta_2,\ldots,\beta_n$ are the roots of
$$p_n(\beta,0,1)=a_{00}+a_{10}\beta + a_{20}\beta^2+\cdots+a_{n0}\beta^n,$$
then
\begin{equation}\label{eq:qn}
p_n(x,y,z)-a_{n0}\prod_{j=1}^n (x-\alpha_j y -\beta_j z)=y\,z\,q_{n-2}(x,y,z),
\end{equation}
where $q_{n-2}$ is a homogeneous polynomial of degree $n-2$.
\end{lemma}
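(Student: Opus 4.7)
The plan is to show that the difference
$$r(x,y,z) := p_n(x,y,z) - a_{n0}\prod_{j=1}^n (x-\alpha_j y-\beta_j z)$$
vanishes when restricted to $y=0$ and when restricted to $z=0$. Since $r$ is a homogeneous polynomial of degree $n$ in $x,y,z$ and the linear forms $y$ and $z$ are coprime in $\CC[x,y,z]$, divisibility by each of them forces divisibility by their product, and the quotient $q_{n-2}:=r/(yz)$ is automatically homogeneous of degree $n-2$.

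First I would set $z=0$. On the one hand, $p_n(x,y,0)=a_{n0}x^n+a_{n-1,1}x^{n-1}y+\cdots+a_{0n}y^n$ is the top homogeneous part, so
$$p_n(x,y,0)=y^n\, p_n(x/y,1,0)=a_{n0}\,y^n\prod_{j=1}^n\bigl(x/y-\alpha_j\bigr)=a_{n0}\prod_{j=1}^n(x-\alpha_j y),$$
where the middle equality uses that $\alpha_1,\ldots,\alpha_n$ are the roots of $p_n(\alpha,1,0)$ and the leading coefficient is $a_{n0}$. On the other hand, the product in $r$ becomes $\prod_{j=1}^n(x-\alpha_j y)$ upon setting $z=0$. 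Hence $r(x,y,0)=0$, so $z$ divides $r$.

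Next I would repeat the same argument with $y=0$: the polynomial $p_n(x,0,z)=a_{00}z^n+a_{10}xz^{n-1}+\cdots+a_{n0}x^n$ satisfies
$$p_n(x,0,z)=z^n\, p_n(x/z,0,1)=a_{n0}\,z^n\prod_{j=1}^n(x/z-\beta_j)=a_{n0}\prod_{j=1}^n(x-\beta_j z),$$
while the product in $r$ reduces to $\prod_{j=1}^n(x-\beta_j z)$ when $y=0$. Therefore $r(x,0,z)=0$, and $y$ divides $r$. Combining the two divisibilities gives $yz\mid r$, which is exactly the claim.

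There is essentially no technical obstacle: the whole argument rests on recognizing that the dehomogenizations $p_n(\alpha,1,0)$ and $p_n(\beta,0,1)$ factor over $\CC$ with leading coefficient $a_{n0}$, which is why the hypothesis $a_{n0}\ne 0$ (ensured after the rotation \eqref{eq:substit}) is needed. The only thing one should mention explicitly is that $\deg r\le n$ together with $yz\mid r$ forces the quotient to be homogeneous of degree $n-2$.
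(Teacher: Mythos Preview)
Your argument is correct and complete. It differs from the paper's proof in style rather than in substance: the paper phrases the same observation geometrically, noting that the zero locus of $r$ meets each of the lines $\mathcal{L}_z=\{z=0\}$ and $\mathcal{L}_y=\{y=0\}$ in $n+1$ points (the $(\alpha_j,1,0)$, the $(\beta_j,0,1)$, and the extra point $(1,0,0)$ that lies on both lines), and then invokes B\'ezout's theorem to conclude that a degree-$n$ curve containing $n+1$ collinear points must contain the whole line. Your approach sidesteps B\'ezout entirely by computing $r(x,y,0)$ and $r(x,0,z)$ directly from the factorizations of the univariate polynomials $p_n(\alpha,1,0)$ and $p_n(\beta,0,1)$; this is more elementary and makes the role of the hypothesis $a_{n0}\ne 0$ (as the common leading coefficient of both factorizations) completely transparent. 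The paper's geometric viewpoint, on the other hand, is what motivates the variant Lemma~\ref{lem:tang}, where $(1,0,0)$ lies on $\mathcal{C}$ and one replaces a linear factor by the tangent line: there the intersection-multiplicity bookkeeping is natural, whereas the purely algebraic substitution argument would be slightly more awkward to set up.
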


\begin{proof}
Pick the line $\mathcal{L}_z=\left\{z=0  \right\}$ in $\CC\PP^2$
and consider the intersection $\mathcal{C}\cap \mathcal{L}_z$.
Since $a_{n0}\ne 0$, the $n$ points in the intersection (counted with multiplicities) are
$\{(\alpha_j,1,0)\}_{j=1,\ldots,n}$.
Analogously, the set of points $\{(\beta_j,0,1)\}_{j=1,\ldots,n}$ equals the intersection of
$\mathcal{C}$ with the line $\mathcal{L}_y=\left\{ y=0  \right\}.$

The reduction \eqref{eq:qn} follows from the construction of
$\alpha_1,\alpha_2,\ldots,\alpha_n$ and $\beta_1,\beta_2, \ldots,\beta_n$.
Indeed, the zero locus of $\prod_{j=1}^n(x-\alpha_j y -\beta_j z)$ is a union of $n$ lines (dashed grey on Figure~\ref{BorPict1})
$$\mathcal{L}_j=\left\{ x-\alpha_j y -\beta_j z=0 \right\} \mbox{ through } (\alpha_j,1,0) \mbox{ and } (\beta_j,0,1).$$
Then the set of zeros  $$\left\{ (x,y,z)\in \CC\PP^2 \,:\,p_n(x,y,z)-a_{n0}\prod_{j=1}^n (x-\alpha_j y -\beta_j z)=0 \right\}$$ contains
$n+1$ points $\{ (\alpha_j,1,0)\}_{j=1,\ldots,n}$ and $(1,0,0)$ on the line $\mathcal{L}_z$; and
$n+1$ points $\{ (\beta_j,0,1)\}_{j=1,\ldots,n}$ and $(1,0,0)$ on the line $\mathcal{L}_y$. Therefore, by B\'ezout's theorem,  it contains both lines
$\mathcal{L}_z\cup \mathcal{L}_y$ and it can
be presented as the zero locus of $y z q_{n-2}(x,y,z)$, where
$q_{n-2}$ is a polynomial of degree $n-2$.
\end{proof}

\begin{figure}
\begin{center}
\includegraphics[width=8cm]{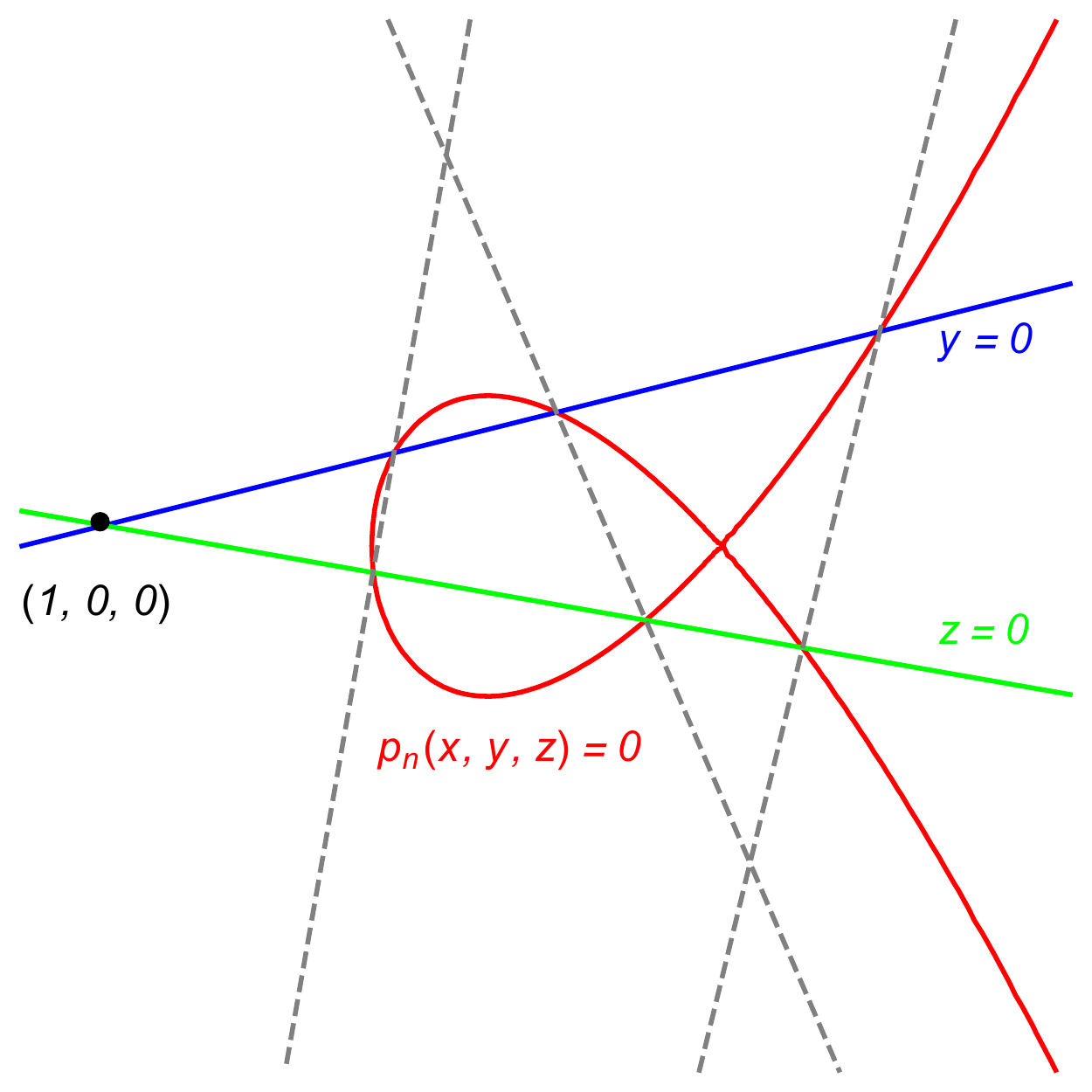}
\end{center}
\caption{The lines $y=0$ and $z=0$ intersecting in $(1,0,0)\notin \mathcal{C}$.\label{BorPict1}}
\end{figure}

The reduction \eqref{eq:qn} is a vital key for our constructions of determinantal
representations with $n\times n$ matrices. By using the reduction and a careful
placement of linear polynomials in an $n\times n$ matrix pencil, we  show
in the following sections how to construct a representation of the minimal
possible size for all polynomials up to degree $n=5$.

\begin{remark}\label{rem:notline}{\rm
In the sequel we always assume that $p_n(x,y,z)$ does not define a line, in other words
$p_n(x,y,z)\ne \left(\root n\of {a_{n0}}\, x + \root n\of {a_{0n}}\,
y + \root n\of {a_{00}}\, z\right)^n$. Such form, which is easy to detect, gives
a straightforward determinantal representation with diagonal matrices.

Once the option that ${\cal C}$ is a line is excluded, we know
that a generic line intersects ${\cal C}$ in at least two distinct
points. If necessary, we make a preliminary change of variables so that each of
$\mathcal{L}_y$ and $\mathcal{L}_z$ intersects $\mathcal{C}$ in at least two distinct points. In other words, none of the points $(\alpha_i,1,0),(\beta_j,0,1)$ have order $n$.
}\end{remark}\\

It is natural to ask whether $a_{n 0}\neq 0$ is a necessary condition to obtain
the reduction~(\ref{eq:qn}).
Note that $a_{n0}= 0$ if and only if $(1,0,0)\in \mathcal{C}$.
 When $(1,0,0)\in \mathcal{C}$ is a smooth point, we can compute the tangent
\begin{align}\label{eq:qtang}
T \mathcal{C}_{(1,0,0)}&=\left\{ \frac{\partial p_n}{\partial x}(1,0,0)\, x+ \frac{\partial p_n}{\partial y}(1,0,0)\, y+ \frac{\partial p_n}{\partial z}(1,0,0)\, z=0    \right\}\\
&=\left\{ a_{n-1, 1}y+a_{n-1, 0}z=0    \right\}\nonumber.
\end{align}
Note that $ a_{n-1, 1}\, a_{n-1, 0}\neq 0$ if and only if the tangent is neither of  the lines $\mathcal{L}_y,\ \mathcal{L}_z$.
In this case there exists a similar reduction of the polynomial $p_n$ as in Lemma~\ref{lem:reduk1}.
\medskip

\begin{lemma}\label{lem:tang}
Let $p_n$ be a bivariate polynomial of degree $n$ in the homogeneous
form \eqref{eq:pn} such that $(1,0,0)$ is a smooth point of its zero locus \eqref{eq:locus}.
If $a_{n-1, 1}\, a_{n-1, 0}\neq 0$, there exists a polynomial $q_{n-2}$ of degree $n-2$ such that $p_n$ reduces to
\begin{equation}\label{eq:qntan}
p_n(x,y,z)-( a_{n-1, 1}y+a_{n-1, 0}z) \prod_{j=1}^{n-1} (x-\alpha_j y -\beta_j z)=y\,z\,q_{n-2}(x,y,z),
\end{equation}
where $\alpha_1,\ldots,\alpha_{n-1}$ are the roots of $p(\alpha,1,0)=a_{n-1, 1}\alpha^{n-1}  +\cdots+a_{1, n-1} \alpha +a_{0\, n}=0$
and $\beta_1,\ldots,\beta_{n-1}$ are the roots of $p(\beta,0,1)=a_{00} +a_{10}\beta+ a_{20}\beta^2+\cdots+a_{n-1, 0}\beta^{n-1}=0$.
\end{lemma}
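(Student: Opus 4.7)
The plan is to mimic the proof of Lemma~\ref{lem:reduk1}. Set
$$D(x,y,z) := p_n(x,y,z)-(a_{n-1,1}y+a_{n-1,0}z)\prod_{j=1}^{n-1}(x-\alpha_j y-\beta_j z),$$
a homogeneous polynomial of degree~$n$. The goal is to show that $y\mid D$ and $z\mid D$; the quotient $q_{n-2}=D/(yz)$ is then the desired homogeneous polynomial of degree $n-2$.

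Geometrically, the subtracted term cuts out the union of the tangent line $T=\{a_{n-1,1}y+a_{n-1,0}z=0\}$ to $\mathcal{C}$ at $(1,0,0)$ together with the $n-1$ lines $\mathcal{L}_j=\{x-\alpha_j y-\beta_j z=0\}$ joining $(\alpha_j,1,0)$ to $(\beta_j,0,1)$. The tangent~$T$ replaces the ``missing'' $n$-th line of Lemma~\ref{lem:reduk1}, which is no longer available because $(1,0,0)$ now lies on~$\mathcal{C}$ (so $a_{n0}=0$) and can no longer serve as an external pivot for $n$ chords.

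To prove $z\mid D$, I would apply B\'ezout's theorem to $\{D=0\}$ and $\mathcal{L}_z$. Both $\mathcal{C}$ and the auxiliary configuration meet $\mathcal{L}_z$ in the same $n-1$ points $(\alpha_j,1,0)$, with multiplicities matching those of the $\alpha_j$, and at $(1,0,0)$. The crucial new ingredient is that $(1,0,0)$ is a singular point of $\{D=0\}$ of multiplicity at least~$2$: in the affine chart $x=1$, the linear part of $p_n(1,y,z)$ at the origin equals $a_{n-1,1}y+a_{n-1,0}z$ by the tangent formula~\eqref{eq:qtang}, and the linear part of the subtracted polynomial is also $a_{n-1,1}y+a_{n-1,0}z$, since $\prod_{j=1}^{n-1}(1-\alpha_j y-\beta_j z)$ equals $1$ at $(y,z)=(0,0)$. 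These linear terms cancel in $D$, so $(1,0,0)$ is a double point of $\{D=0\}$ and its intersection with any line through it has local multiplicity $\geq 2$. Summing contributions on $\mathcal{L}_z$ gives at least $(n-1)+2=n+1$ intersections, exceeding the B\'ezout bound $n\cdot 1$; hence $\mathcal{L}_z\subset\{D=0\}$. A symmetric argument on $\mathcal{L}_y$, using the $\beta_j$'s, yields $y\mid D$, and coprimality of $y$ and $z$ delivers $yz\mid D$.

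The main technical point---and the only substantive departure from Lemma~\ref{lem:reduk1}---is the multiplicity-$2$ count at $(1,0,0)$. This is exactly where the hypothesis $a_{n-1,1}\cdot a_{n-1,0}\neq 0$ enters: it ensures that~$T$ is a genuine line distinct from both $\mathcal{L}_y$ and $\mathcal{L}_z$, and that $p_n(\alpha,1,0)$ and $p_n(\beta,0,1)$ really have degree $n-1$, so the products appearing in~$D$ carry the correct number of factors.
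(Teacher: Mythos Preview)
Your argument is correct and follows essentially the same route as the paper's proof: both use B\'ezout on $\mathcal{L}_y$ and $\mathcal{L}_z$ against $\{D=0\}$, with the key observation that $(1,0,0)$ contributes intersection multiplicity at least~$2$. Your explicit computation of the cancelling linear parts in the chart $x=1$ is in fact a cleaner justification of that multiplicity claim than the paper's, which simply asserts it.
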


\begin{proof}
The intersection of $\mathcal{C}=\{p_n=0\}$ with $\mathcal{L}_z=\left\{ z=0  \right\}$ consists of $n$ points.
We obtain them as solutions of
$$p_n(x,y,0)=a_{n-1, 1}x^{n-1}y  +\cdots+a_{1, n-1} x y^{n-1}+a_{0 n} y^n=0.$$
Thus,
$$ \mathcal{C}\cap \mathcal{L}_z=\{(\alpha_1,1,0),\ldots,(\alpha_{n-1},1,0), (1,0,0)\} , $$
Analogously, the $n$ solutions of
$$p_n(x,0,z)=a_{00} z^n+a_{10}x z^{n-1} + a_{20}x^2 z^{n-2}+\cdots+a_{n-1, 0}x^{n-1} z=0$$
yield the $n$ points in the intersection  $\mathcal{C}\cap \mathcal{L}_y$, which means
$$ \mathcal{C}\cap \mathcal{L}_y=\{ (\beta_{1},0,1),\ldots,(\beta_{n-1},0,1),(1,0,0) \}.$$
The union of
$T \mathcal{C}_{(1,0,0)}$ and the lines
$ \mathcal{L}_j=\left\{ x-\alpha_j y -\beta_j z=0 \right\}$ through $ (\alpha_j,1,0)$  and $ (\beta_j,0,1)$ for $j=1,\ldots ,n-1$
can be presented as the set of zeros of the following polynomial
$$( a_{n-1, 1}y+a_{n-1, 0}z) \prod_{j=1}^{n-1} (x-\alpha_j y -\beta_j z).$$
Then the zero locus
$$\left\{p_n(x,y,z)-( a_{n-1, 1}y+a_{n-1, 0}z) \prod_{j=1}^{n-1} (x-\alpha_j y -\beta_j z)=0 \right\}$$ intersects the line $\mathcal{L}_z$ in
at least $n+1$ points counted with multiplicities: $\{ (\alpha_j,1,0)\}_{j=1,\ldots,n-1}$ and $(1,0,0)$ with multiplicity $\geq 2$; and  it intersects the line $\mathcal{L}_y$ in at least $n+1$ points counted with multiplicities:
$\{ (\beta_j,0,1)\}_{j=1,\ldots,n}$ and $(1,0,0)$ with multiplicity $\geq 2$.
Therefore, by B\'ezout's theorem,  it contains both lines
$\mathcal{L}_z\cup \mathcal{L}_y$ and it can thus
be presented as the zero locus of $y z q_{n-2}(x,y,z)$, where
$q_{n-2}$ is a polynomial of degree $n-2$, which gives the reduction \eqref{eq:qntan}.
\end{proof}
\medskip

The following example shows that when $T \mathcal{C}_{(1,0,0)}$ equals either $\mathcal{L}_y$
or  $\mathcal{L}_z$, it is not possible
to reduce $p_n(x,y,z)$ into $y\,z\,q_{n-2}(x,y,z)$ by the subtraction of
a product of $n$ linear forms.
\medskip

\begin{example}{\rm Consider $ \mathcal{C}$, a cubic  defined by the polynomial $p_3(x,y,z)=x^2 y - y^3 + x z^2 + z^3=0$. From $p_3(x,y,z)=z^2 (x + z) + y (x - y) (x + y)=0$ we obtain
\begin{enumerate}
\item[a)] $T \mathcal{C}_{(1,0,0)}=\mathcal{L}_y=\{y=0\}$,
\item[b)]  $\mathcal{C}\cap \mathcal{L}_z=\{(1,1,0),(-1,1,0),(1,0,0)\}$,
\item[c)]  $\mathcal{C}\cap \mathcal{L}_y=\{(-1,0,1), (1,0,0),(1,0,0)\}$.
\end{enumerate}
Assume that it is possible to reduce $p_3$ into the form
\begin{equation}\label{eq:countercub} p_3(x,y,z)- \ell_1(x,y,z)\, \ell_2(x,y,z)\, \ell_3(x,y,z)=y\,z\,q_{1}(x,y,z),\end{equation}
where $q_{1}(x,y,z)$ and $\ell_i(x,y,z)$ are linear polynomials. The reducible cubic defined by  $\ell_1\, \ell_2\, \ell_3=0$  is a union of lines $\{\ell_i=0\}$ for $i=1,2,3$
containing  the four points  $(1,1,0),(-1,1,0),(1,0,0),(-1,0,1)$.
Then at least one of the lines $\ell_i$ needs to contain two of these points, thus $\ell_i$ is either $y=0,\ z=0,\  x+y+z=0$ or $x-y+z=0$. Since $p_3(x,y,z)$ is not divisible by $y$ or $z$, it must hold $\ell_i=x+y+z$ or $\ell_i=x-y+z$.
By setting $y=0$ and $z=0$  in (\ref{eq:countercub}) and using the fact that $\CC[x,y], \CC[x,z]$ are unique factorization domains,  we obtain a contradiction with the possible $\ell_1,\ell_2,\ell_3$.
}\end{example}

\section{Quadratic polynomials}\label{sec:quadratic}

Let $p_2$ be a quadratic bivariate polynomial in the homogeneous form
\begin{equation}\label{eq:pol2}
p_2(x,y,z)=a_{00}z^2+a_{10}xz+a_{01}yz+a_{20}x^2+a_{11}xy+a_{02}y^2,
\end{equation}
such that $a_{20}\ne 0$. If $\alpha_1,\alpha_2$ are the roots of the quadratic
equation
$$p_2(\alpha,1,0)=a_{02}+a_{11}\alpha + a_{20}\alpha^2=0$$
and similarly $\beta_1,\beta_2$ are the roots of
$$p_2(\beta,0,1)=a_{00}+a_{10}\beta + a_{20}\beta^2=0,$$
then clearly
$$p_2(x,y,z)-a_{20}(x-\alpha_1 y -\beta_1 z)(x-\alpha_2 y -\beta_2 z)=q_0 yz$$
for a scalar $q_0$. This gives
the determinantal representation
\begin{equation}\label{eq:det2}
xA+yB+zC=\left[\begin{matrix} a_{20}(x-\alpha_1y-\beta_1z) & -q_0 y\cr
z & x-\alpha_2y-\beta_2z\end{matrix}\right].
\end{equation}

We remark that in the case $\alpha_1\ne \alpha_2$ and $\beta_1\ne \beta_2$ we obtain
a different representation if we exchange the order of $\beta_1$ and $\beta_2$.
This is important when $p_2$ is a decomposable quadratic polynomial, as then we
can choose the order so that $q_0=0$. Numerically it seems reasonable to
select the order that gives the smallest absolute value of $q_0$.


The following lemma shows that for a quadratic bivariate polynomial in the homogeneous form
\eqref{eq:pol2} there always exists such a $2\times 2$ determinantal representation that
one of its elements is fixed to be $x$. We use this
particular representation in Section \ref{sec:quartic} to construct determinantal representations of
quartic polynomials.

\begin{lemma}\label{lm:kvadrat} Let $p_2$
be a quadratic bivariate polynomial
in the homogeneous form \eqref{eq:pol2}.  Then there exist linear polynomials
$\ell_i(x,y,z)=r_i x + s_i y +t_i z$ for $i=1,2,3$ such that
$$p_2(x,y,z) = \ell_1(x,y,z)\ell_2(x,y,z)-x \ell_3(x,y,z).$$
\end{lemma}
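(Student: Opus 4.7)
The plan is to rewrite the identity $p_2 = \ell_1 \ell_2 - x\ell_3$ as
$$p_2(x,y,z) + x\,\ell_3(x,y,z) = \ell_1(x,y,z)\,\ell_2(x,y,z),$$
so that the task reduces to finding $\ell_3$ for which $p_2 + x\ell_3$ is a decomposable quadratic form.

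The key observation is that if $\ell_3 = r_3 x + s_3 y + t_3 z$, then $x\ell_3 = r_3 x^2 + s_3 xy + t_3 xz$ affects only the coefficients of the monomials $x^2$, $xy$, $xz$ in $p_2$, and leaves the coefficients of $y^2$, $yz$, $z^2$ untouched. I therefore choose
$$\ell_3(x,y,z) = -a_{20} x - a_{11} y - a_{10} z,$$
so that all monomials in $p_2 + x\ell_3$ containing $x$ vanish, leaving
$$p_2(x,y,z) + x\,\ell_3(x,y,z) = a_{02} y^2 + a_{01} yz + a_{00} z^2,$$
a homogeneous binary quadratic in $y$ and $z$ alone.

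Any homogeneous quadratic in two variables factors over $\mathbb{C}$ into a product of two linear forms (by the fundamental theorem of algebra applied to the dehomogenisation, or equivalently because the corresponding $2\times 2$ symmetric matrix is always of rank at most~$2$). This yields linear polynomials $\ell_1(y,z)$ and $\ell_2(y,z)$ with $\ell_1 \ell_2 = a_{02} y^2 + a_{01} yz + a_{00} z^2$; viewing them as linear polynomials in $(x,y,z)$ with zero coefficient on $x$ completes the construction.

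There is essentially no obstacle: the freedom in the three coefficients of $\ell_3$ is exactly enough to erase the three $x$-carrying coefficients of $p_2$. The only point worth noting is that the factorisation step above is not unique (any rescaling $\ell_1\mapsto \lambda \ell_1$, $\ell_2 \mapsto \lambda^{-1}\ell_2$ is admissible), and that if one wants an explicit numerical procedure for producing $\ell_1, \ell_2$ one can simply invoke Algorithm~1 applied to $a_{02} y^2 + a_{01} yz + a_{00} z^2$.
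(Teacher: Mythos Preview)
Your proof is correct and is in fact more direct than the paper's. The paper proceeds by case analysis on which of the coefficients $a_{00}$, $a_{02}$ is nonzero and then invokes the reduction machinery of Lemma~\ref{lem:reduk1} (with variables permuted) to peel off a product of two genuine ternary linear forms, leaving a single monomial $q_0xy$ or $q_0xz$ to be absorbed by $x\ell_3$. Your approach is the opposite: you use the full freedom in $\ell_3$ to kill \emph{all} $x$-dependence at once, reducing to the trivial fact that a binary quadratic over $\mathbb{C}$ factors into linear forms.

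What this buys you is a shorter, case-free argument and an $\ell_3$ that is read off directly from the coefficients of $p_2$ with no root-finding at all; root-finding is deferred entirely to the univariate factorisation of $a_{02}y^2+a_{01}yz+a_{00}z^2$. What the paper's approach buys is consistency with the general reduction pattern used throughout (so the lemma becomes an instance of the same idea rather than a separate trick), and $\ell_1,\ell_2$ that depend nontrivially on $x$, which may be preferable numerically when $a_{00}$ or $a_{02}$ dominates. Either construction suffices for the application in Section~\ref{sec:quartic}.
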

\begin{proof}
First, we consider the case $a_{00}\ne 0$.
After exchanging the roles of $x$ and $z$ in Lemma~\ref{lem:reduk1} we can
subtract
$a_{00}(z-\alpha_1 x-\beta_1 y)(z-\alpha_2 x-\beta_2 y)$ from $p_2(x,y,z)$ to get the
residual $q_0 x y$, where $q_0\in\CC$.
Here
$\alpha_1,\alpha_2$ are the roots of the quadratic equation
$p_2(\alpha,0,1)=0$ and $\beta_1,\beta_2$ are the roots of the
quadratic equation $p(0,\beta,1)=0$. This gives
$$p_2(x,y,z)=a_{00}(z-\alpha_1 x-\beta_1 y)(z-\alpha_2 x-\beta_2 y)+q_0 x y.$$

If $a_{00}=0$ and $a_{02}\ne 0$, we use the same approach as above, only that
we exchange the roles of $y$ and $z$. If
$\alpha_1,\alpha_2$ are the roots of
$p_2(\alpha,1,0)=0$ and $\beta_1,\beta_2$ are the roots of $p_2(0,1,\beta)=0$, then we get $q_0\in\CC$ such that
$$p_2(x,y,z)=a_{02}(y-\alpha_1 x-\beta_1 z)(y-\alpha_2 x-\beta_2 z)+q_0 x z.$$

Finally, if $a_{00}=a_{02}=0$, we can take
$$p_2(x,y,z)=y(a_{01}z+a_{11}x) + x(a_{20}x+a_{10}z).$$
\end{proof}

\section{Cubic polynomials}\label{sec:cubic}

Let $p_3$ be a cubic bivariate polynomial in the homogeneous form
\begin{equation}\label{eq:cubpol}
p_3(x,y,z)=a_{00}z^3+a_{10}xz^2+a_{01}yz^2+\cdots+a_{30}x^3+\cdots+a_{03}y^3
\end{equation}
with
$a_{30}\ne 0$.
Let $\alpha_1,\alpha_2,\alpha_3$ be the roots of
$p_3(\alpha,1,0)=0$
and let $\beta_1,\beta_2,\beta_3$ be the roots of
$p_3(\beta,0,1)=0$.
Then, according to Lemma \ref{lem:reduk1}, there exist easily computable coefficients
$b_{00}, b_{10}$, and $b_{01}$  for which
\begin{equation}\label{eq:cubred}
$$p_3(x,y,z)-a_{30}\prod_{j=1}^3(x-\alpha_j y -\beta_j z)=yz(b_{00}z + b_{10}x+b_{01}y).
\end{equation}
This reduction gives the determinantal representation of $p_3$
\begin{equation}\label{eq:rep3b}
xA+yB+zC=\left[\begin{matrix}a_{30}(x-\alpha_1y-\beta_1 z) & 0 & b_{00}z + b_{10}x+b_{01}y\cr
y & x-\alpha_2y-\beta_2 z & 0 \cr
0 &  z & x-\alpha_3y-\beta_3 z\end{matrix}\right].
\end{equation}

For the quadratic polynomial \eqref{eq:pkvad2} we have a nice way to check whether the
polynomial is a product of linear polynomials. Namely, in such case the symmetric
matrix in the corresponding quadratic form \eqref{eq:kvformgeneral} is singular.
There is no such simple tool for the cubic polynomial \eqref{eq:cubpol}, however we can make
use of  the reduction \eqref{eq:cubred}. When $p_3$ is a product of three
linear polynomials, we can order $\alpha_1,\alpha_2,\alpha_3$ and
$\beta_1,\beta_2,\beta_3$ in such a way that $b_{00}=b_{01}=b_{10}=0$ in \eqref{eq:cubred}. In the
generic case, when all roots $\alpha_1,\alpha_2,\alpha_3$ and
$\beta_1,\beta_2,\beta_3$ are simple, there are 6 possible permutations that we need
to check.
\medskip

\begin{example}\label{ex:Weier}{\rm
It is well known that by a projective change of coordinates, every irreducible cubic curve
can be brought into the Weierstrass form (see, e.g., \cite{shafarevich})
$$yz^2=x(x+\theta_1 y)(x+\theta_2y),$$ where $\theta_1,\theta_2\in\CC$.
The corresponding polynomial is $p_3(x,y,z)=x(x+\theta_1 y)(x+\theta_2y)-yz^2$.
It is easy to see that the above procedure yields the determinantal representation
\begin{equation}\label{eq:wei1}
xA+yB+zC=\left[\begin{matrix}x & 0 & -z\cr
y & x-\theta_1y & 0 \cr
0 &  z & x-\theta_2 y\end{matrix}\right].
\end{equation}
}
\end{example}
\medskip

Determinantal representations are not unique.
If $x A+yB+zC$ and $x A'+yB'+zC'$ are $n\times n$ determinantal representations of the same polynomial, then we call determinantal representations \textit{equivalent} if there exist matrices $P,Q\in \mbox{GL}(n,\CC)$ such that
$$P\cdot (x A+yB+zC)\cdot Q=x A'+yB'+zC'.$$
The following example shows that
permutations of $\alpha_i$ and of $\beta_i$ in Lemma~\ref{lem:reduk1} yield different (nonequivalent) determinantal  representations.
\medskip

\begin{example}\label{ex:permute}{\rm
    Consider the Weierstrass cubic in Example~\ref{ex:Weier}. If $\theta_1\ne \theta_2$, then
it is easy to see that determinantal representations
\[ xA'+yB'+zC'=\left[\begin{matrix}x & 0 & -z\cr
y & x-\theta_2y & 0 \cr
0 &  z & x-\theta_1 y\end{matrix}\right]\]
and \eqref{eq:wei1} are not equivalent. Indeed, $A=A'=I$ implies that $Q=P^{-1}$, and it remains to be verified that $P$ such that
 $PB=B'P$ and $PC=CP$ does not exist.
}\end{example}
\medskip

\begin{remark}\label{cubtheta}{\rm
 All determinantal representations of a smooth cubic curve $\mathcal{C}=\{p_3(x,y,z)=0\}$ can be parametrised by the affine points on $\mathcal{C}$. This follows from the famous Cook and Thomas correspondence~\cite{Cook} between line bundles and determinantal representations. Vinnikov~\cite{VinnikovComplete},~\cite{VinnikovCubic}  explicitely described this  correspondence for cubics in the canonical Weierstrass form. It turns out that
a smooth cubic has exactly three symmetric determinantal representations corresponding to the three even theta characteristics  on $\mathcal{C}$. 

The symmetric determinantal representations of a smooth cubic can be explicitely computed by the following algorithm due to Harris~\cite{HarrisGalois}: there exist precisely three solutions $(a,b)\in \CC^2$ such that
$a\, p_3=\mbox{Hes}\,(b\, p_3+\mbox{Hes}\,(p_3))$, where Hes denotes the determinant of the Hessian matrix. An elementary proof of this construction can be found in~\cite{PlaumannHVcurves}.
Moreover, Harris in~\cite[Chapter II.2]{HarrisGalois} describes a symbolic algorithm for finding the nine flexes of a smooth cubic $\mathcal{C}$. Combining this with Vinnikov's  determinantal representations  of Weierstrass cubics, we can parametrize the whole set of determinantal representations  of $\mathcal{C}$ by the affine points of its corresponding Weierstrass form.
}
\end{remark}


\section{Quartic polynomials}\label{sec:quartic}

Let $p_4$ be a quartic bivariate polynomial in the homogeneous
form
$$p_4(x,y,z)=a_{00}z^4+a_{10}xz^3+a_{01}yz^3+\cdots+a_{40}x^4+\cdots+a_{04}y^4,$$
where as before we  assume that $a_{40}\ne 0$.
Denote by $\alpha_1,\alpha_2,\alpha_3,\alpha_4$ the roots of
$p_4(\alpha,1,0)=0$
and by $\beta_1,\beta_2,\beta_3,\beta_4$ the roots of
$p_4(\beta,0,1)=0$.
The ansatz for a determinantal
representation of $p_4$ is
\begin{equation}\label{eq:rep4b}
\left[\begin{matrix}a_{40}(x-\alpha_1y-\beta_1 z) & -y & 0 & 0\cr
0 & x-\alpha_2y-\beta_2 z & r_1 x + s_1 y + t_1 z & r_3 x + s_3 y + t_3 z\cr
0 & 0 & x-\alpha_3y-\beta_3 z & r_2 x + s_2 y + t_2 z \cr
z & 0 & 0 & x-\alpha_4y-\beta_4 z\end{matrix}\right],$$
\end{equation}
 whose determinant is
\begin{align}
a_{40}\prod_{j=1}^4(x-\alpha_j y-\beta_j z)+y z\det
&\left[\begin{matrix}
 r_1 x + s_1 y + t_1 z & r_3 x + s_3 y + t_3 z \cr
  x-\alpha_3y-\beta_3 z & r_2 x + s_2 y + t_2 z
\end{matrix}\right]\label{eq:det4}\\
=a_{40}\prod_{j=1}^4(x-\alpha_j y-\beta_j z)&+yz(r_1 x + s_1 y + t_1)(r_2 x + s_2 y + t_2)\nonumber \\
&-y z (x-\alpha_3 y-\beta_3 z)(r_3 x + s_3 y + t_3).\nonumber
\end{align}
The idea behind the ansatz is the following. From the construction of
$\alpha_1,\alpha_2,\alpha_3,\alpha_4$ and $\beta_1,\beta_2,\beta_3,\beta_4$  and the reduction~\eqref{eq:qn} in  Lemma \ref{lem:reduk1}  it follows that
\begin{equation}\label{eq:ost4}
p_4(x,y,z)-a_{40}\prod_{j=1}^4(x-\alpha_j y -\beta_j z)=yzq_2(x,y,z),
\end{equation}
where $q_2$ is a polynomial of degree $2$. If we are able to find
linear homogeneous polynomials $r_i x + s_i y + t_i z$ for $i=1,2,3$
such that
\begin{equation}\label{eq:2t2minor}
q_2=\det
\left[\begin{matrix}
 r_1 x + s_1 y + t_1 z & r_3 x + s_3 y + t_3 z \cr
  x-\alpha_3y-\beta_3 z & r_2 x + s_2 y + t_2 z
\end{matrix}\right],
\end{equation}
  then we have a determinantal representation of $p_4$.

It turns out that this is always possible due to Lemma \ref{lm:kvadrat}. Using a substitution of variables
$x=\widetilde x + \alpha_3 \widetilde y+\beta_3\widetilde z$,
$y=\widetilde y$, and $z=\widetilde z$ we
change $q_2(x,y,z)$ into $\widetilde q_2(\widetilde x,\widetilde y,\widetilde z)$.
Now we apply Lemma \ref{lm:kvadrat} to obtain
linear homogeneous polynomials
$\widetilde\ell_i(\widetilde x,\widetilde y,\widetilde z)=
\widetilde r_i \widetilde x + \widetilde s_i \widetilde y +\widetilde t_i \widetilde z$
for $i=1,2,3$ such that
$$\widetilde q_2(\widetilde x,\widetilde y,\widetilde z) =
\widetilde\ell_1(\widetilde x,\widetilde y,\widetilde z)\widetilde \ell_2(\widetilde x,\widetilde y,\widetilde z)-\widetilde x \widetilde \ell_3(\widetilde x,\widetilde y,\widetilde z).$$
When we change back the variables, we get
$\ell_i(x,y,z)=r_i x + s_i y + t_i z$ from $\widetilde\ell_i$ for $i=1,2,3$.
The determinant of \eqref{eq:rep4b} is $p_4(x,y,z)$, thus we have constructed a determinantal representation of $p_4$.
\bigskip

Although we have already shown above how to construct a determinantal representation for a
quartic polynomial, let us consider another possible approach. We use the same ansatz,
but take a different path to construct $r_i x + s_i y + t_i z$ for $i=1,2,3$.

The main idea in this alternative approach is to select a nonzero
linear polynomial $\rho x + \sigma y + \tau z$ and then
perturb $q_2$ with $\mu (x-\alpha_3 y-\beta_3 z)(\rho x + \sigma y + \tau z)$ to make
it decomposable. This means that
we choose such parameter $\mu$ that the
difference $$m_2(x,y,z)=q_2(x,y,z)-\mu (x-\alpha_3 y-\beta_3 z)(\rho x + \sigma y + \tau z)$$ is
a product of two linear factors
\begin{equation}\label{eq:razc2b}
m_2(x,y,z) = (r_1 x + s_1 y + t_1 z)(r_2 x + s_2 y + t_2 z).
\end{equation}
Once we have $\mu$ and compute $m_2$, we can use Algorithm 1 to obtain the factors in \eqref{eq:razc2b}.

 %
%

We compute $\mu$ by applying the pencils of conics discussed in Section~\ref{sec:planecurves}.
Consider the pencil
$$s\, q_2(x,y,z)+t\,  (x-\alpha_3 y-\beta_3 z)(\rho x + \sigma y + \tau z).$$
We showed that there exist three (possibly multiple) choices of $(s,t)\in \CC\PP^1$ for which the pencil degenerates;
clearly $(0,1)$ is one of them. For a generic $\rho x + \sigma y + \tau z$, the other two choices have $s\neq 0$ and thus determine $\mu$ by $(s,t)=\left(1,\frac{t}{s}\right)=(1,-\mu)$.

In order to keep our algorithm simple, we take as the first option $\rho x + \sigma y + \tau z=y$.
 Let $q_2(x,y,z)=b_{00}z^2+b_{10}xz+\cdots+b_{01}y^2$. It follows that
$m_2$ is decomposable if and only if
\begin{equation}\label{eq:demu}
\det\left(
\left[\begin{matrix}
2b_{20} & b_{11} & b_{10}\cr
b_{11} & 2b_{02} & b_{01}\cr
b_{10} & b_{01} & 2b_{00}\end{matrix}\right]
-\mu
\left[\begin{matrix}
0 & 1 & 0\cr
1 & -2\alpha_3 & -\beta_3\cr
0 & -\beta_3 & 0\end{matrix}\right]\right)=0.
\end{equation}
In the generic case \eqref{eq:demu} gives a quadratic equation for $\mu$ and
therefore has two solutions. We pick one and then apply Algorithm 1 to $m_2$.

%

However, it can happen that it is not possible to find such
$\mu$ that \eqref{eq:demu} holds.
This occurs if and only if $y(x-\alpha_3y-\beta_3z)$  is the only degenerate conic in
the pencil
\begin{equation}s\, q_2(x,y,z)+t\, y(x-\alpha_3y-\beta_3z).\label{eq:pencilmu}\end{equation}
By Lemma~\ref{lem:extrpencil} this implies that  $q_2$ is indecomposable and one of the lines $y=0$ or $x-\alpha_3 y-\beta_3z=0$ is tangent to $q_2$ at $(\beta_3,0,1)$.
In this case we can take $\rho x + \sigma y + \tau z=z$ and find $\mu'$ such that
\begin{equation}\label{eq:q2prime}
m'_2(x,y,z):=q_2(x,y,z)-\mu' z(x-\alpha_3y-\beta_3z)
\end{equation}
is decomposable, unless $z=0$ is also tangent to $q_2$ at $(\alpha_3,1,0)$. But if $q_2$ has the tangent $y=0$ at $(\beta_3,0,1)$ and the tangent $z=0$ at $(\alpha_3,1,0)$, then we can by Remark~\ref{rem:notline} pick $i\in\{1,2,4\}$ such that  $x-\alpha_i y-\beta_i z\neq x-\alpha_3 y-\beta_3z$.
Then one of the pencils
$$s\, q_2(x,y,z)+t\, y(x-\alpha_i y-\beta_i z)\ \mbox{ or }\ s\, q_2(x,y,z)+t\, z(x-\alpha_i y-\beta_i z)$$
contains more than one degenerate conic. We interchange the 3rd and $i$th diagonal element in the ansatz~\eqref{eq:rep4b} accordingly.


%
%

\section{Quintic  polynomials}\label{sec:quintic}

Let $p_5$ be a quintic bivariate polynomial in the homogeneous
form
$$p_5(x,y,z)=a_{00}z^5+a_{10}xz^4+a_{01}yz^4+\cdots+a_{50}x^5+\cdots+a_{05}y^5$$
that defines a quintic curve
$$\mathcal{C}=\left\{ (x,y,z)\in\CC\PP^2 \, : \, p_5(x,y,z)=0 \right\}.$$
As before we can assume that $a_{50}\ne 0$, which geometrically means that  $(1,0,0)\notin \mathcal{C}$.
Then each of the lines $\mathcal{L}_z=\left\{z=0  \right\}$ and $\mathcal{L}_y=\left\{y=0  \right\}$ intersects $\mathcal{C}$ in five points
$$ \mathcal{C}\cap \mathcal{L}_z=\left\{ (\alpha_i,1,0) \right\}_{i=1,2,3,4,5}\ \mbox{ and }\  \mathcal{C}\cap \mathcal{L}_y=\left\{ (\beta_j,0,1) \right\}_{j=1,2,3,4,5},$$
where  $\alpha_1,\ldots,\alpha_5$ are the roots of
$p_5(\alpha,1,0)=0$ and $\beta_1,\ldots,\beta_5$ are the roots of
$p_5(\beta,0,1)=0.$
By the reduction~\eqref{eq:qn} in  Lemma \ref{lem:reduk1}  there exists a homogeneous polynomial $q_3$ such that
\begin{equation}\label{eq:q6}
p_5(x,y,z)-a_{50}\prod_{j=1}^5(x-\alpha_j y -\beta_j z)=yz\,q_3(x,y,z).
\end{equation}
As always we exclude the case when $\mathcal{C}$ is a line. The following lemma implements Remark~\ref{rem:notline} for a quintic.
\begin{lemma}\label{lem:34}
If $p_5(x,y,z)\ne \left(\root 5\of {a_{50}}\, x + \root 5\of {a_{05}}\,
y + \root 5\of {a_{00}}\, z\right)^5$, then
a preliminary generic rotation of coordinates
$y$ and $z$ around $x$
\begin{equation}\label{eq:substit2}
\left[\begin{matrix}
x \cr
y  \cr
 z
\end{matrix}\right]=\left[\begin{matrix}
 1 & 0 &0 \cr
 0  & c & s \cr
 0 &-s &c
\end{matrix}\right]\, \left[\begin{matrix}
\widetilde{x} \cr
 \widetilde{y}  \cr
 \widetilde{z}
\end{matrix}\right]
\end{equation}
transforms the polynomial $p_5$ in such way that we can assume
that there exists
a permutation of
$\alpha_1,\ldots,\alpha_5$ and of $\beta_1,\ldots,\beta_5$
 such that $\alpha_3\ne\alpha_4$, $\beta_3\ne \beta_4$, and the intersection
$$\mathcal{L}_3=\left\{ x-\alpha_3y-\beta_3 z=0\right\}\, \cap \, \mathcal{L}_4=\left\{ x-\alpha_4y-\beta_4 z=0\right\}
\notin \mathcal{C}.$$
\end{lemma}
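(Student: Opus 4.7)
The plan is to apply a generic rotation of the coordinates $y,z$ around $x$ and then argue by contradiction using B\'ezout's theorem on a grid of candidate lines through the distinguished points of $\mathcal{C}\cap\mathcal{L}_z$ and $\mathcal{C}\cap\mathcal{L}_y$.

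First, since $p_5$ is not the fifth power of a linear form, the reduced curve $\mathcal{C}_{\mathrm{red}}$ (defined by the radical of $p_5$) has degree $r\geq 2$. After a generic rotation~\eqref{eq:substit2}, both $\mathcal{L}_z$ and $\mathcal{L}_y$ meet $\mathcal{C}_{\mathrm{red}}$ transversely, so the multisets $\{\alpha_i\}$ and $\{\beta_j\}$ each contain exactly $r$ distinct values, which I denote $\alpha^{(1)},\ldots,\alpha^{(r)}$ and $\beta^{(1)},\ldots,\beta^{(r)}$. Consider the $r^2$ pairwise distinct lines
$$L_{a,c}=\left\{x=\alpha^{(a)} y + \beta^{(c)} z\right\} \mbox{ through } (\alpha^{(a)},1,0) \mbox{ and } (\beta^{(c)},0,1),$$
both of which already lie on $\mathcal{C}$. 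The conclusion of the lemma reduces to finding $a\neq b$ and $c\neq d$ in $\{1,\ldots,r\}$ for which $L_{a,c}\cap L_{b,d}\notin\mathcal{C}$: the matched pairs $(\alpha^{(a)},\beta^{(c)})$ and $(\alpha^{(b)},\beta^{(d)})$ go at positions $3,4$ of the permutation, with the remaining three positions filled using the remaining multiplicities. The distinctness $\alpha_3\neq\alpha_4,\beta_3\neq\beta_4$ is then automatic, for if either equality held the intersection would collapse to one of $(\alpha_3,1,0)$ or $(\beta_3,0,1)$, both on $\mathcal{C}$.

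Assume for contradiction that every such intersection lies on $\mathcal{C}$. A direct computation shows
$$L_{a,c}\cap L_{b,d}=(\alpha^{(a)}\beta^{(d)}-\alpha^{(b)}\beta^{(c)},\,\beta^{(d)}-\beta^{(c)},\,\alpha^{(a)}-\alpha^{(b)}).$$
Fix $(a,c)$; as $(b,d)$ ranges over the $(r-1)^2$ pairs with $b\neq a,d\neq c$, the corresponding $(y:z)$-ratios $(\beta^{(d)}-\beta^{(c)}):(\alpha^{(a)}-\alpha^{(b)})$ on $L_{a,c}$ are pairwise distinct for generic rotation and differ from $(1:0)$ and $(0:1)$. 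Hence $L_{a,c}\cap\mathcal{C}$ contains at least $(r-1)^2+2$ distinct points. Since $(r-1)^2+2-r=r^2-3r+3>0$ for all real $r$, B\'ezout's theorem applied to $L_{a,c}$ and $\mathcal{C}_{\mathrm{red}}$ forces $L_{a,c}\subset\mathcal{C}_{\mathrm{red}}$. But then $\mathcal{C}_{\mathrm{red}}$ would contain $r^2$ distinct line components, violating $\deg\mathcal{C}_{\mathrm{red}}=r<r^2$ for $r\geq 2$, a contradiction.

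The main obstacle is making the term ``generic rotation'' rigorous: one must check that the rotation parameter $(c,s)$ avoids a finite collection of proper algebraic subvarieties of $\CC\PP^1$ (those forcing $\mathcal{L}_z$ or $\mathcal{L}_y$ to fail transversality with $\mathcal{C}_{\mathrm{red}}$, and those causing any two of the $(r-1)^2$ intersection ratios on some $L_{a,c}$ to coincide). Each condition cuts out a Zariski-closed proper subset, so a generic choice evades all of them simultaneously, which is the routine but essential verification underpinning the above argument.
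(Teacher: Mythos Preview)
Your proof is correct and in fact more thorough than the paper's. The paper argues as follows: since $\mathcal{C}$ is not a line, for a generic angle $\varphi$ each of the rotated lines $\{\cos\varphi\,y+\sin\varphi\,z=0\}$ and $\{-\sin\varphi\,y+\cos\varphi\,z=0\}$ meets $\mathcal{C}$ in at least two distinct points, say $T_1\ne T_2$ and $T_3\ne T_4$; it then simply \emph{asserts} that for generic $\varphi$ one of the two ``diagonal'' intersections $\mathcal{L}(T_1,T_3)\cap\mathcal{L}(T_2,T_4)$ or $\mathcal{L}(T_1,T_4)\cap\mathcal{L}(T_2,T_3)$ lies off $\mathcal{C}$, and takes that pair for positions $3,4$. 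No justification of this last assertion is given. Your approach is genuinely different: you pass to the reduced curve, use all $r$ distinct $\alpha$-values and $\beta$-values, and derive a contradiction by counting points on each $L_{a,c}$ and invoking B\'ezout to force $r^2$ distinct line components into a degree-$r$ curve. This buys you an actual argument where the paper leaves a gap.

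One small improvement: your genericity condition on the $(r-1)^2$ ratios is stronger than you need and is the only place your argument is not fully self-contained. If instead you fix a single $d\ne c$ and let only $b$ vary over $\{1,\ldots,r\}\setminus\{a\}$, the ratios $(\beta^{(d)}-\beta^{(c)}):(\alpha^{(a)}-\alpha^{(b)})$ are automatically pairwise distinct (the numerator is a fixed nonzero scalar while the denominators are distinct and nonzero), so you get $r-1$ distinct interior points on $L_{a,c}$ without any additional genericity hypothesis. Together with the two endpoints this already gives $r+1>r$ points on $L_{a,c}\cap\mathcal{C}_{\mathrm{red}}$, and B\'ezout finishes as before. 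With this tweak, the only genericity you need is transversality of $\mathcal{L}_y,\mathcal{L}_z$ with $\mathcal{C}_{\mathrm{red}}$, which is genuinely routine.
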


\begin{proof}
We know from $p_5(x,y,z)\ne \left(\root 5\of {a_{50}}\, x + \root 5\of {a_{05}}\,
y + \root 5\of {a_{00}}\, z\right)^5$ that $\mathcal{C}$ is not a line.
It follows that for a generic $\varphi$ each of the lines $\cos\varphi y + \sin\varphi z =0$
and $-\sin\varphi y + \cos\varphi z =0$
intersects ${\cal C}$ in at least two distinct
points. Denote by $T_1\ne T_2$ and $T_3\ne T_4$ the intersections of
$\cos\varphi y + \sin\varphi z =0$ and $-\sin\varphi y + \cos\varphi z =0$  with ${\cal C}$
respectively. Moreover, we can assume that at least one of the
intersections
$\mathcal{L}(T_1,T_3)\cap \mathcal{L}(T_2,T_4)$ or
$\mathcal{L}(T_1,T_4)\cap \mathcal{L}(T_2,T_3)$,
where $\mathcal{L}(T_i,T_j)$
is a line through $T_i$ and $T_j$,
does not lie on $\mathcal{C}$.

 Therefore, if we apply a preliminary transformation of coordinates
  \eqref{eq:substit2} where we take $c=\cos\varphi$ and $s=\sin\varphi$, then in the new coordinates
$\mathcal{L}_y$ and $\mathcal{L}_z$ intersect in $(1,0,0)\notin \mathcal{C}$ and
each of them intersects $\mathcal{C}$ in at least two distinct points.
We can thus permute $\alpha_1,\ldots,\alpha_5$ and
$\beta_1,\ldots,\beta_5$ so that $\alpha_3\ne\alpha_4$ and $\beta_3\ne \beta_4$.
\end{proof}
\medskip

\begin{figure}
\begin{center}
\includegraphics[width=8cm]{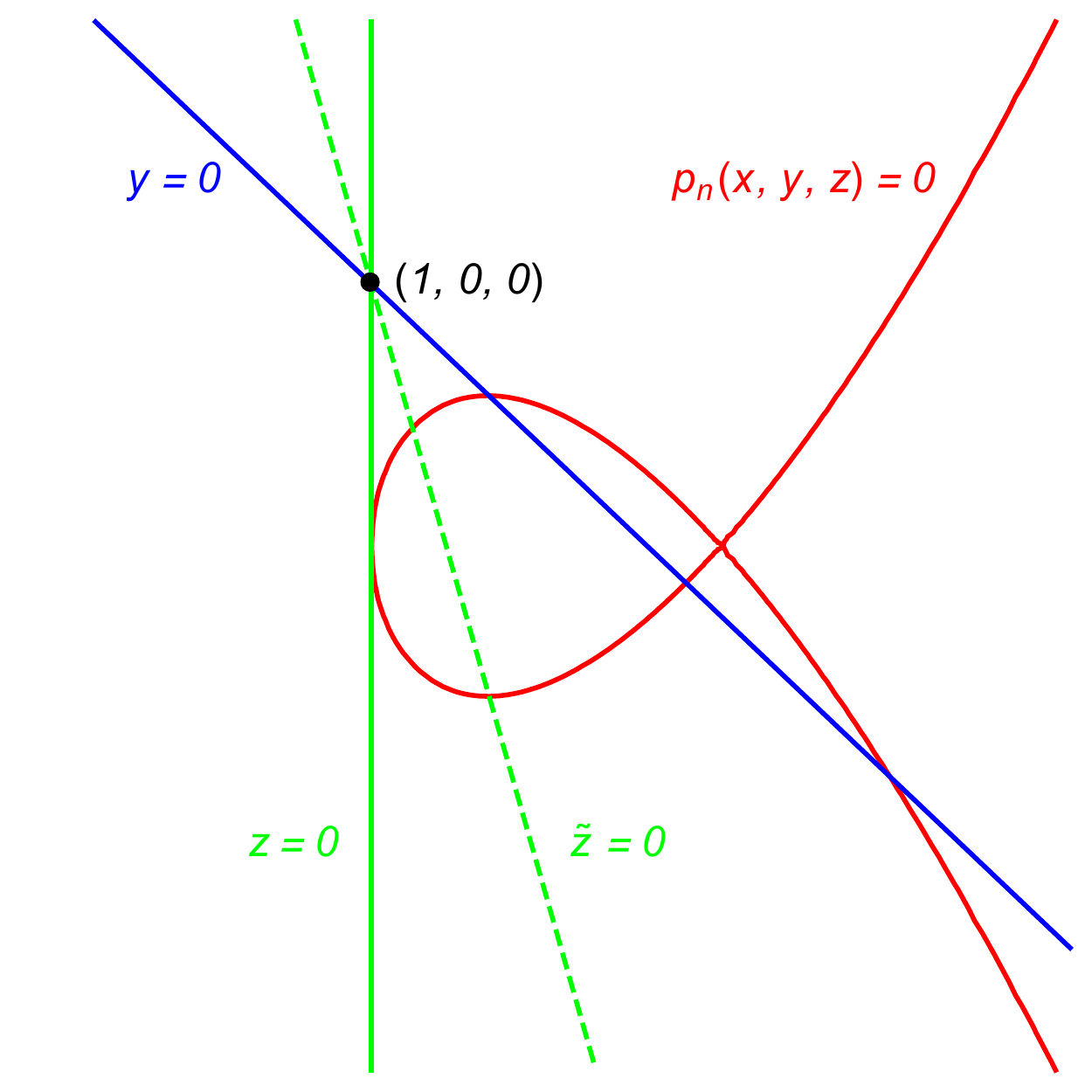}
\end{center}
\caption{Choose such coordinates that $\mathcal{L}_y$ and $\mathcal{L}_z$ intersect
$\mathcal{C}$ in more than one point.\label{BorPict2}
}
\end{figure}
\begin{figure}
\begin{center}
\includegraphics[width=8cm]{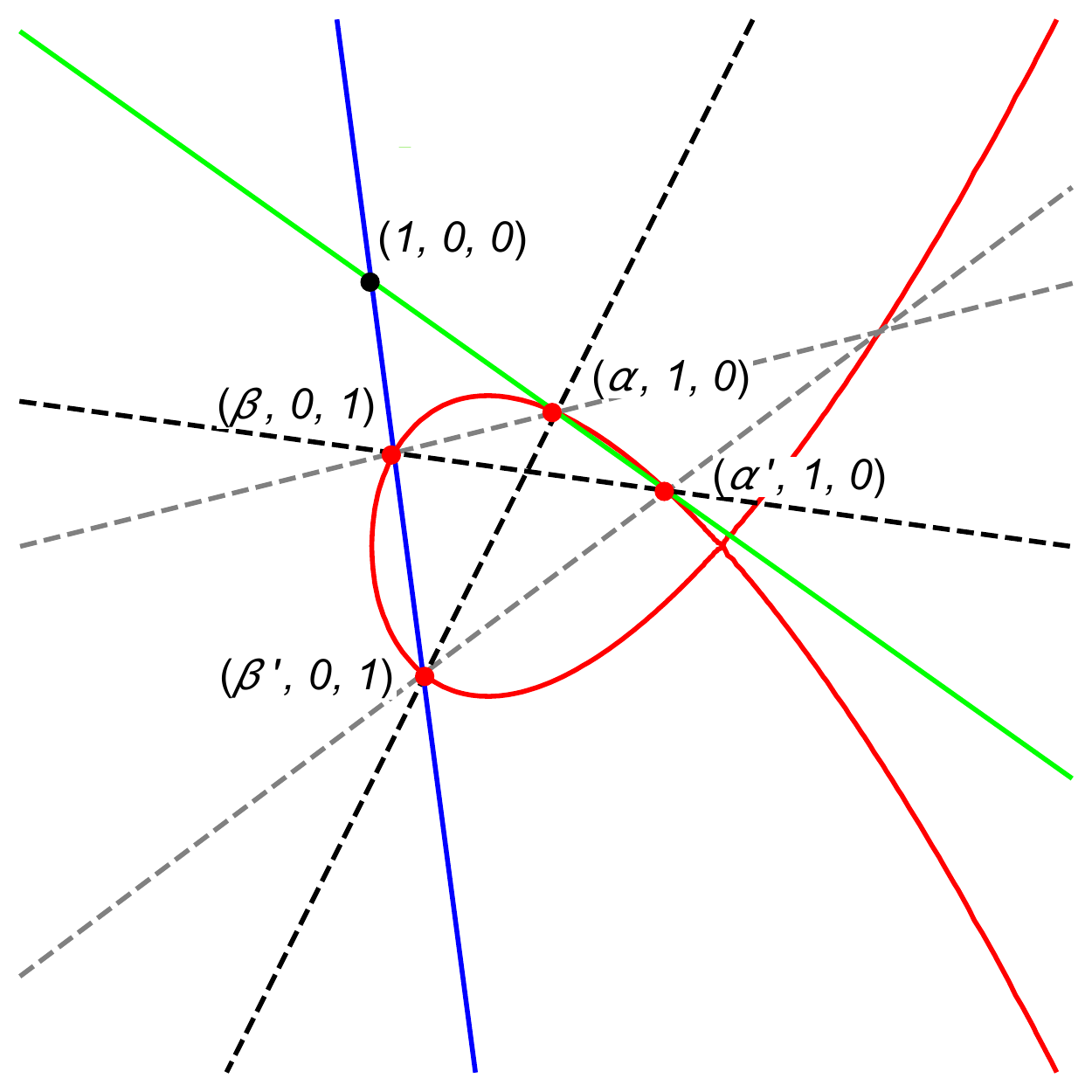}
\end{center}
\caption{The pencil of quadrics through $(\alpha,1,0),(\alpha',1,0), (\beta,0,1), (\beta',0,1)$ contains three singular quadrics: $ \left\{yz=0\right\},\, \left\{ (x-\alpha y-\beta z)(x-\alpha' y-\beta' z)=0\right\}$ and
 $\left\{ (x-\alpha y-\beta' z)(x-\alpha' y-\beta z)=0\right\}$.\label{BorPict3}}
\end{figure}

The ansatz for a determinantal
representation of $p_5$ is
\begin{equation}\label{eq:rep5b}
\left[\begin{matrix}
a_{50}(x-\alpha_1y-\beta_1 z) \!&\! y \!&\!  0 \!&\!  0 \!&\!  0 \cr
0 \!&\! x-\alpha_2y-\beta_2 z \!&\!  \gamma_1 x + \delta_1 y + \epsilon_1 z \!&\!  0 \!&\!   \gamma_4 x + \delta_4 y + \epsilon_4 z \cr
0 \!&\! 0 \!&\!  x-\alpha_3y-\beta_3 z \!&\!  \gamma_2 x + \delta_2 y + \epsilon_2 z \!&\!  0\cr
0 \!&\!0 \!&\!  0 \!&\!  x-\alpha_4y-\beta_4 z \!&\!   \gamma_3 x + \delta_3 y + \epsilon_3 z \cr
z \!&\! 0 \!&\!  0 \!&\!  0 \!&\!   x-\alpha_5 y-\beta_5 z
\end{matrix}\right],
\end{equation}
 whose determinant is
$$a_{50}\prod_{j=1}^5(x-\alpha_j y-\beta_j z)+y\,z\, \det
\left[\begin{matrix}
 \gamma_1 x + \delta_1 y + \epsilon_1 z & 0 &  \gamma_4 x + \delta_4 y + \epsilon_4 z \cr
 x-\alpha_3y-\beta_3 z & \gamma_2 x + \delta_2 y + \epsilon_2 z & 0\cr
0 & x-\alpha_4y-\beta_4 z &  \gamma_3 x + \delta_3 y + \epsilon_3 z
\end{matrix}\right].$$
Next we prove that there exist such $ \{\gamma_i, \delta_i, \epsilon_i\}_{i=1,2,3,4}$  that the above $3\times3$ determinant equals $q_3(x,y,z)$ in~\eqref{eq:q6}.

If $p_5(x,y,z)\ne \left(\root 5\of {a_{50}}\, x + \root 5\of {a_{05}}\,
y + \root 5\of {a_{00}}\, z\right)^5$ we can apply Lemma~\ref{lem:34} and permute the indices in $\alpha_1,\alpha_2,\alpha_3,\alpha_4,\alpha_5$ and 
$\beta_1,\beta_2,\beta_3,\beta_4,\beta_5$  so that the intersection
$$\mathcal{L}_3=\left\{ x-\alpha_3y-\beta_3 z=0\right\}\, \cap \, \mathcal{L}_4=\left\{ x-\alpha_4y-\beta_4 z=0\right\}
\notin \mathcal{C}.$$
Since
$$\mathcal{L}_3 \cap  \mathcal{L}_4=\left\{ x-\alpha_3y-\beta_3 z=0\right\}\cap \left\{ x-\alpha_4y-\beta_4 z=0\right\}=
\left(\alpha_3\beta_4-\alpha_4\beta_3,\beta_4-\beta_3,\alpha_3-\alpha_4
 \right)$$
 does not lie on $\mathcal{C}$, it is also not a zero of $q_3(x,y,z)$. Next we change the variables
 \begin{equation}\label{eq:change5}
\left[\begin{matrix}
 \widetilde{x} \cr
 \widetilde{y}  \cr
  \widetilde{z}
\end{matrix}\right]=\left[\begin{matrix}
 \alpha_3\beta_4-\alpha_4\beta_3 & \beta_4-\beta_3 & \alpha_3-\alpha_4 \cr
 1 & -\alpha_3 & -\beta_3  \cr
 1 & -\alpha_4 &-\beta_4
\end{matrix}\right]\, \left[\begin{matrix}
x \cr
y  \cr
z
\end{matrix}\right].
\end{equation}
Since the first row of the matrix in \eqref{eq:change5} is the cross product of
the second and third row, the matrix is invertible for
$(\alpha_3,\beta_3)\ne (\alpha_4,\beta_4)$.

Note that in the new coordinates $\mathcal{L}_3,  \mathcal{L}_4$ have
equations $\{\widetilde{y}=0\},\{\widetilde{z}=0\}$ respectively and the intersection point
$\mathcal{L}_3 \cap  \mathcal{L}_4$ becomes $(1,0,0)$. This implies
that $\widetilde{b}_{30}\neq 0$, where $\widetilde{b}_{30}$ is the coefficient of $\widetilde{q}_3(\widetilde{x},\widetilde{y},\widetilde{z})$ at $\widetilde x^3$.
Then we can perform the reduction~(\ref{eq:qn}) for $\widetilde{q}_3$ in the new variables
$$\widetilde{q}_3(\widetilde{x},\widetilde{y},\widetilde{z})-
\widetilde{b}_{30}\prod_{j=1}^3 (\widetilde{x}-\widetilde{\alpha}_j \widetilde{y} -\widetilde{\beta}_j \widetilde{z})=\widetilde{y}\,\widetilde{z}\, (\widetilde{\beta}_{10} \widetilde{x}+\widetilde{\beta}_{01} \widetilde{y}+\widetilde{\beta}_{00} \widetilde{z}),$$
which yields
$$\widetilde{q}_3(\widetilde{x},\widetilde{y},\widetilde{z})=\det
\left[\begin{matrix} \widetilde{a}_{30}(\widetilde{x}-\widetilde{\alpha}_1 \widetilde{y} -\widetilde{\beta}_1 \widetilde{z}) & 0 & \widetilde{\beta}_{10} \widetilde{x}+\widetilde{\beta}_{01} \widetilde{y}+\widetilde{\beta}_{00} \widetilde{z} \cr
 \widetilde{y} & \widetilde{x}-\widetilde{\alpha}_2 \widetilde{y} -\widetilde{\beta}_2 \widetilde{z} & 0 \cr
0 &   \widetilde{z} & \widetilde{x}-\widetilde{\alpha}_3 \widetilde{y} -\widetilde{\beta}_3 \widetilde{z} \end{matrix}\right].$$

Substituting back $x,y,z$ we obtain $\gamma_i,\delta_i,\epsilon_i$ for $i=1,2,3,4$
and
\begin{equation}\label{eq:repq3}
q_3(x,y,z)= \det
\left[\begin{matrix}
 \gamma_1 x + \delta_1 y + \epsilon_1 z & 0 &  \gamma_4 x + \delta_4 y + \epsilon_4 z \cr
 x-\alpha_3y-\beta_3 z & \gamma_2 x + \delta_2 y + \epsilon_2 z & 0\cr
0 & x-\alpha_4y-\beta_4 z &  \gamma_3 x + \delta_3 y + \epsilon_3 z
\end{matrix}\right].\end{equation}
\bigskip

In Lemma~\ref{lem:34} we showed that using a generic rotation
we can arrange the roots of $p_5(\alpha,1,0)=0$ and
the roots of $p_5(\beta,0,1)=0$ so that
that $\alpha_3\ne\alpha_4$ and $\beta_3\ne \beta_4$. Moreover, the intersection of the lines
$x-\alpha_3y-\beta_3 z=0$ and $x-\alpha_4y-\beta_4 z=0$ does not belong to $\mathcal{C}$.
This implies that after the change of variables we have $\widetilde b_{30}\ne 0$ in $\widetilde q_3$
and we can thus apply the procedure from Section \ref{sec:cubic}.

Let us remark that a preliminary change of coordinates from Lemma~\ref{lem:34}, where we require
 $\alpha_3\ne\alpha_4$ and $\beta_3\ne \beta_4$, is not necessary for our procedure. Based on Lemma \ref{lem:tang}, we can also find a determinantal representation in
the situation where $(\alpha_3,\beta_3)\ne(\alpha_4,\beta_4)$ and the
intersection $T=\{x-\alpha_3y-\beta_3 z=0\} \cap \{x-\alpha_4y-\beta_4 z=0\}$ is an element of $\mathcal{C}$.
In this case $T$ needs to be a smooth point and none of the lines $x-\alpha_3y-\beta_3 z=0$ and
$x-\alpha_4y-\beta_4 z=0$ should be a tangent to $\mathcal{C}$ at $T$. This ensures that
we can apply Lemma \ref{lem:tang} to get a determinantal representation of the
form \eqref{eq:repq3} for $q_3$.

In the implementation of our procedure it is useful to check in advance
if such conditions are fulfilled, in which case a preliminary change of variables does not need to be applied.

\section{Sextic  polynomials}\label{sec:sextic}
Let $p_n$ be a bivariate polynomial of degree $3\le n\le 5$.
Recall the shapes of the $4\times 4$ and $5 \times 5$ determinantal representations~\eqref{eq:rep4b} and~\eqref{eq:rep5b}, respectively, and observe that the
$3 \times 3$ determinant~\eqref{eq:rep3b} is the same as
$$\det \left[\begin{matrix}
a_{30}(x-\alpha_1y-\beta_1 z) & y & 0 \cr
0 & x-\alpha_2y-\beta_2 z &  b_{00}z + b_{10}x+b_{01}y \cr
z &  0 & x-\alpha_3y-\beta_3 z\end{matrix}\right].$$
 If $M=xA+yB+zC$ is a determinantal representations of $p_n$, then $M$ is of the following shape. 
With the exception of $z$ in the left lower corner, $M$ is
upper triangular with the diagonal elements
$a_{n0}(x-\alpha_1 y-\beta_1 z),
x-\alpha_2 y-\beta_2 z,\ldots,x-\alpha_n y-\beta_n z$,
where $\alpha_i$ and $\beta_i$ are the roots of
$p_n(\alpha,1,0)=0$ and
$p_n(\beta,0,1)=0$.
The first row of $M$ is $
[ a_{n0}(x-\alpha_1y-\beta_1 z), \, (-1)^{n-1} y, \, 0, \, \cdots, \,  0]$ and the submatrix $M(2\colon\! n-1,3\colon\! n)$ is a
determinantal representation of the polynomial $q_{n-2}$ from the reduction \eqref{eq:qn}.


Now, let $p_6$ be a sextic bivariate polynomial in the homogeneous form,
with $a_{60}=p_6(1,0,0)\ne 0$. If we try to extend the methods from
the previous sections, then the first step is to
apply Lemma~\ref{lem:reduk1}
to obtain the reduction
\begin{equation}\label{eq:sextic}
p_6(x,y,z)-a_{60}\prod_{j=1}^6(x-\alpha_j y -\beta_j z)=yz\,q_4(x,y,z),
\end{equation}
where $q_4$ is a homogeneous polynomial of degree $4$.
Following the same approach as for $n\leq5$, an appropriate shape for a
determinantal representation of $p_6$ seems to be
\begin{equation}\label{eq:sextictry} {\small
\left[\begin{matrix}a_{60}(x-\alpha_1y-\beta_1 z)\hspace{-1em} \!&\! -y \!&\! 0 \!&\! 0 \!&\! 0 \!&\! 0\cr
0 \!&\! x-\alpha_2y-\beta_2 z \!&\!* \!&\! * \!&\! * \!&\!
* \cr
0 \!&\! 0 \!&\! x-\alpha_3y-\beta_3 z \!&\! * \!&\! * \!&\! * \cr
0 \!&\! 0 \!&\! 0 \!&\! x-\alpha_4y-\beta_4 z \!&\! * \!&\! *\cr
0 \!&\! 0 \!&\! 0 \!&\! 0 \!&\!  x-\alpha_5 y-\beta_5 z & *  \cr
z \!&\! 0 \!&\! 0 \!&\! 0 \!&\! 0 \!&\! x-\alpha_6 y-\beta_6 z\end{matrix}\right]}.
\end{equation}
Here $*$ denote possibly nonzero elements of the
form $\rho x + \sigma y + \tau z $ such that
\eqref{eq:sextictry} is a determinantal representation of $p_6$.
This holds if
$$\det\left[\begin{matrix}
* \!&\! * \!&\! * \!&\! * \cr
x-\alpha_3y-\beta_3 z \!&\! * \!&\! * \!&\! * \cr
0 \!&\! x-\alpha_4y-\beta_4 z \!&\! * \!&\! *\cr
0 \!&\! 0 \!&\!  x-\alpha_5 y-\beta_5 z & *
\end{matrix}\right]=q_4(x,y,z).$$
The above is a $4\times 4$ upper Hessenberg matrix whose three elements
on the subdiagonal are fixed. While we were able to derive simple algorithms for $n=4$ and $n=5$, where we have submatrices of
size $2\times 2$  and $3\times 3$ such that one and two elements are
fixed, respectively,  at present we have no practical
algorithm for the case $n=6$. The main obstacle is that it is not possible to apply the reduction from  Lemma~\ref{lem:reduk1}
to the determinant of the $4\times 4$ submatrix with three fixed elements.

This does not imply that a $6\times 6$ representation for
a sextic polynomial does not exist. We know from \cite{Dixon} that a representation of the minimum size always exists, but a different construction needs to be applied. For instance, the construction from \cite{Bornxn}
gives an $n\times n$ representation for a square-free bivariate polynomial
of degree $n$, i.e., a polynomial that is not a multiple of a square of a
non-constant polynomial.

\section{Algorithm}\label{sec:numalg}

The following algorithm encapsulates the results from the previous sections. We can
apply it to construct a determinantal representation of a bivariate polynomial of small degree in the
homogeneous form.


\noindent\rule{\textwidth}{1pt}
\noindent {\bf Algorithm 2.} Given a bivariate polynomial $p_n$ of degree $2\le n\le 5$
in the homogeneous form
$$
p(x,y,z)=a_{00}z^n+a_{10}xz^{n-1}+a_{01}yz^{n-1}+\cdots+a_{n0}x^n+\cdots+a_{0n}y^n,
$$
where at least one of the coefficients $a_{n0},a_{n-1,1},\ldots,a_{0n}$ is nonzero,
the algorithm returns $n\times n$ matrices $A$, $B$, and $C$, such that
$\det(xA+yB+zC)=p(x,y,z)$.\\[-0.5em]
\noindent\rule{\textwidth}{0.5pt}

\begin{enumerate}
\item[(1)]If $n=5$, test if
$p_5$ has the form $p_5(x,y,z)=(\alpha x +\beta y + \gamma z)^5$. Compute
the residual
$$r_5(x,y,z)= p_5(x,y,z)-\left(\root 5\of {a_{50}}\, x + \root 5\of {a_{05}}\,
y + \root 5\of {a_{00}}\, z\right)^5$$
and, if $r_5\equiv 0$, 
return
$A=\root 5\of {a_{50}}\, I$,
$B=\root 5\of {a_{05}}\, I$, and $C=\root 5\of {a_{00}}\, I$.
\item[(2)] If $a_{n0}=0$, apply a linear substitution of variables
$x=c_1 \widetilde x+s_1 \widetilde y$, $y=-s_1 \widetilde x + c_1 \widetilde y$,
$z=\widetilde z$, where  $c_1$ and $s_1$ are selected such that $c_1^2+s_1^2=1$ and $p_n(c_1,s_1,0)\ne 0$.
\item[(3)] Compute the roots $\alpha_1,\alpha_2,\ldots,\alpha_n$ of
$p_n(\alpha,1,0)=a_{n0} \alpha^n+a_{n-1,1}\alpha^{n-1}  + \cdots +a_{0n}=0$
and the roots $\beta_1,\beta_2,\ldots,\beta_n$ of
$p_n(\beta,0,1)=a_{00}+a_{10}\beta + \cdots + a_{n0}\beta^n=0.$

If $n=5$, order the roots 
so that $\alpha_3\ne \alpha_4$, $\beta_3\ne \beta_4$,
and the intersection of
$x-\alpha_3y-\beta_3 z=0$ and $x-\alpha_4y-\beta_4 z=0$ does not
lie on $p_5(x,y,z)=0$. If this is not possible,
apply a linear substitution
$y=c_2 \widetilde y+s_2 \widetilde z$, $z=-s_2 \widetilde y + c_2 \widetilde z$,
$x=\widetilde x$, where random $c_2$ and $s_2$ are selected such that $c_2^2+s_2^2=1$, and
return to step (3).
\item[(4)] Compute the polynomial $q_{n-2}(x,y,z)
=r_{00}z^{n-2}+\cdots+r_{n-2,0}x^{n-2}+\cdots+r_{0,n-2}y^{n-2}$
 of degree $n-2$ such that
$$p_n(x,y,z)-a_{n0}\prod_{i=1}^n(x-\alpha_iy-\beta_iz)=yz q_{n-2}(x,y,z).$$
\item[(5)] If $n=2$,
 set
$$xA+yB+zC=\left[\begin{matrix} a_{20}(x-\alpha_1y-\beta_1z) & -r_{00}y\cr
z & x-\alpha_2y-\beta_2z\end{matrix}\right].$$
\item[(6)] If $n=3$, 
set
$$xA+yB+zC=\left[\begin{matrix}a_{30}(x-\alpha_1y-\beta_1 z) & 0 & r_{10}x + r_{01}y+r_{00}z\cr
y & x-\alpha_2y-\beta_2 z & 0 \cr
0 &  z & x-\alpha_3y-\beta_3 z\end{matrix}\right].$$
\item[(7)] If $n=4$, then:
\begin{enumerate}
\item[a)]
Obtain $\widetilde q_2$ from $q_2$ by the change of variables
$x=\widetilde x + \alpha_3 \widetilde y+\beta_3\widetilde z$,
$y=\widetilde y$, and $z=\widetilde z$.
\item[b)]Apply the proof of Lemma \ref{lm:kvadrat} to get
$\widetilde l_i(\widetilde x,\widetilde y,\widetilde z)=\widetilde r_i \widetilde x + \widetilde s_i \widetilde y
+\widetilde t_i \widetilde z$ for $i=1,2,3$ such that
$$\widetilde q_2(\widetilde x,\widetilde y,\widetilde z)=
\widetilde l_1(\widetilde x,\widetilde y,\widetilde z) \widetilde l_2(\widetilde x,\widetilde y,\widetilde z)-
\widetilde x \widetilde l_3(\widetilde x,\widetilde y,\widetilde z).$$
\item[c)]Change the variables back to obtain
$l_i(x,y,z)=r_i x + s_i y + t_i z$ from $\widetilde l_i$ for $i=1,2,3$.
\end{enumerate}
\smallskip
Set $xA+yB+zC=$
$$
\left[\begin{matrix}a_{40}(x-\alpha_1y-\beta_1 z) & -y & 0 & 0\cr
0 & x-\alpha_2y-\beta_2 z & r_1 x + s_1 y + t_1 z & r_3 x + s_3 y + t_3 z\cr
0 & 0 & x-\alpha_3y-\beta_3 z & r_2 x + s_2 y + t_2 z \cr
z & 0 & 0 & x-\alpha_4y-\beta_4 z\end{matrix}\right].$$
%
%
 \item[(8)] If $n=5$, then:
\begin{enumerate}
\item[a)] Apply the change of variables
$$
\left[\begin{matrix}
 \widetilde{x} \cr
 \widetilde{y}  \cr
  \widetilde{z}
\end{matrix}\right]=\left[\begin{matrix}
 \alpha_3\beta_4-\alpha_4\beta_3 & \beta_4-\beta_3 & \alpha_3-\alpha_4 \cr
 1 & -\alpha_3 & -\beta_3  \cr
 1 & -\alpha_4 &-\beta_4
\end{matrix}\right]\, \left[\begin{matrix}
x \cr
y  \cr
z
\end{matrix}\right]$$
to obtain $\widetilde q_3$ from $q_3$.
\item[b)] Apply the algorithm recursively on $\widetilde q_3$ to obtain
a representation of the form
$$\widetilde{q}_3(\widetilde{x},\widetilde{y},\widetilde{z})=\det\left(
\left[\begin{matrix} \widetilde{q}_{30}(\widetilde{x}-\widetilde{\alpha}_1 \widetilde{y} -\widetilde{\beta}_1 \widetilde{z}) & 0 & \widetilde{r}_{10} \widetilde{x}+\widetilde{r}_{01} \widetilde{y}+\widetilde{r}_{00} \widetilde{z} \cr
 \widetilde{y} & \widetilde{x}-\widetilde{\alpha}_2 \widetilde{y} -\widetilde{\beta}_2 \widetilde{z} & 0 \cr
0 &   \widetilde{z} & \widetilde{x}-\widetilde{\alpha}_3 \widetilde{y} -\widetilde{\beta}_3 \widetilde{z} \end{matrix}\right]\right).$$
\item[c)]
Change the variables back to obtain $r_i,s_i,t_i$ for $i=1,2,3,4$
such that
$$q_3(x,y,z)= \det\left(
\left[\begin{matrix}
 r_1 x + s_1 y + t_1 z & 0 &  r_4 x + s_4 y + t_4 z \cr
 x-\alpha_3y-\beta_3 z & r_2 x + s_2 y + t_2 z & 0\cr
0 & x-\alpha_4y-\beta_4 z &  r_3 x + s_3 y + t_3 z
\end{matrix}\right]\right).$$
\end{enumerate}
Set $xA+yB+zC=$
$$\left[\begin{matrix}a_{50}(x-\alpha_1y-\beta_1 z)\hspace{-1em} & y & 0 & 0 &0 \cr
0 & x-\alpha_2y-\beta_2 z & r_1 x + s_1 y + t_1 z & 0 &  r_4 x + s_4 y + t_4 z \cr
0 & 0 & x-\alpha_3y-\beta_3 z & r_2 x + s_2 y + t_2 z & 0\cr
0 &0 &0 & x-\alpha_4y-\beta_4 z &  r_3 x + s_3 y + t_3 z \cr
z &0 &0 &0 &  x-\alpha_5 y-\beta_5 z
\end{matrix}\right].$$
\item[(9)]If a substitution was used in Step (2) or Step (5), substitute the variables back
before returning the final determinantal representation $xA+yB+zC$.\\
 \end{enumerate}\vspace{-2em}
\noindent\rule{\textwidth}{0.5pt}
\medskip

Some comments:
\begin{itemize}
\item In Step (2) and Step (5) we apply a rotation
of variables $x,y$ around $z$ and of $y,z$ around $x$, respectively. Such
changes of variables are also used in \cite{Jonsson}.
\item After the substitution in Step (2) we should continue with the
 polynomial
$\widetilde p_n(\widetilde x,\widetilde y,\widetilde z)$ such
that $\widetilde a_{n0}\ne 0$. However, to keep the notation simple,
we again write $p_n(x,y,z)$ instead of $\widetilde p_n(\widetilde x,\widetilde y,\widetilde z)$ in Step (3) and further,
where we
assume now that $a_{n0}\ne 0$. If a change of variables was used,
we change back to the original variables in Step (9).
\item Even if $p_n$ is a polynomial
with real coefficients, the representation might
be complex
because the roots $\alpha_1,\ldots,\alpha_n$ and
$\beta_1,\ldots,\beta_n$ are not necessarily real.
\end{itemize}
\bigskip

\section{Numerical examples}\label{sec:numex}
The first example shows the output
of Algorithm 2 for a quintic bivariate polynomial.

\begin{example}\rm
We take the polynomial $$p(x,y,z)=x(x-y-z)(x+y+z)(x-2y-2z)(x+2y+2z)+yz^4+y^2z^3+y^3z^2.$$
If we order coefficients $\alpha_i$ and $\beta_i$ for $i=1,\ldots,5$ as
$\alpha_1=\beta_1=2$, $\alpha_2=\beta_2=-2$, $\alpha_3=\beta_3=1$, $\alpha_4=\beta_4=-1$, and
$\alpha_5=\beta_5=0$, then Algorithm 2 returns the determinantal representation
$$\left[\begin{matrix}x-2y-2 z\hspace{-1em} & y & 0 & 0 &0 \cr
0 & x+2y+2 z & \frac{1}{32}(i\sqrt{3} x +y +z) & 0 &  -\frac{3}{4}z \cr
0 & 0 & x-y-z & 2(i\sqrt{3}x-y+z) & 0\cr
0 &0 &0 & x+y+ z & 4 z \cr
z &0 &0 &0 &  x
\end{matrix}\right].$$
\end{example}
\medskip

Although Algorithm 2 works well in the exact computation,  we introduced some
modifications in the numerical implementation in order to make it more numerically stable. Some
of them are:
\begin{itemize}
\item
Instead of using rotations
of coordinates $x,y$ around $z$ in Step (2) and $y,z$ around $x$ in Step (5),
we rather apply a transformation (\ref{eq:transt}), where $T$
is in both cases a random orthogonal $3\times 3$ matrix. This prevents that
$|a_{n0}|$ is small
 compared to $\max_{i+j\le n}|a_{ij}|$, as then some of the
 roots $\alpha_1,\ldots,\alpha_n$ and
$\beta_1,\ldots,\beta_n$ might have large absolute values and the
matrix in (\ref{eq:change5}) might be ill-conditioned.
\item For $n=4$ we order the roots so that
$|\alpha_3|=\min_{i=1,\ldots,4}|\alpha_i|$
and
$|\beta_3|=\min_{i=1,\ldots,4}|\beta_i|$
to minimize the condition number
of the change of variables in Step (7a).
\end{itemize}

More details can be found in the implementation of Algorithm 2 in Matlab \cite{Matlab}, which is included in \cite{BorBR}. We
applied Algorithm 2 to numerically
solve
random systems of bivariate polynomials of small degrees by
the  approach proposed in \cite{BorMichiel}. We denote this method
by {\tt Lin345}. The main idea is
to treat the system as a two-parameter
eigenvalue problem using determinantal representations.

We start with a system of two bivariate polynomials
\begin{equation}\label{eq:p}
\begin{split}
p(x,y) &:= \sum_{i=0}^{n_1} \, \sum_{j=0}^{{n_1}-i} \ p_{ij} \, x^i \, y^j=0, \\
q(x,y) &:= \sum_{i=0}^{n_2} \, \sum_{j=0}^{{n_2}-i} \ q_{ij} \, x^i \, y^j=0.
\end{split}
\end{equation}
and use Algorithm 2 to compute  matrices
$A_1,B_1,C_1$ and $A_2, B_2, C_2$ such that
\begin{equation}\label{eq:dr}
\begin{split}
\det(A_1+x B_1 +y C_1)&=p(x,y),\\[1mm]
\det(A_2+x B_2 +y C_2)&=q(x,y).
\end{split}
\end{equation}
A root $(x,y)$ of \eqref{eq:p} corresponds to an
eigenvalue of the two-parameter eigenvalue problem \cite{Atkinson}
\begin{equation}\label{eq:twopar}
\begin{matrix}
(A_1+x B_1+y C_1) \, u=0,\\[0.2em]
(A_2+x B_2+y C_2) \, v=0,
\end{matrix}
\end{equation}
where $u$ and $v$ are nonzero vectors.
See \cite{BorMichiel} and references therein for details
on the two-parameter eigenvalue problems and the available
numerical methods. To
solve \eqref{eq:twopar} we
consider a pair
 of generalized eigenvalue problems
\begin{equation}\label{eq:twopardelta}
\begin{matrix}
(\Delta_1-x \Delta_0) \, w=0,\\[0.2em]
(\Delta_2-y \Delta_0) \, w=0,
\end{matrix}
\end{equation}
where
$\Delta_0=B_1\otimes C_2-C_1\otimes B_2$,
$\Delta_1=C_1\otimes A_2-A_1\otimes C_2$,
$\Delta_2=A_1\otimes B_2-B_1\otimes A_2$
and $w=u\otimes v$.

\medskip

\begin{example}\rm\label{ex:genpl}
In this example we generated random bivariate polynomials whose coefficients are random real
numbers uniformly distributed on $[0,1]$ or random complex numbers, such that real
and imaginary parts are both uniformly distributed on $[0,1]$. We compared
{\tt Lin345} to  {\tt Lin2} from \cite{BorMichiel}, which returns matrices of size $3$, $5$ and $8$ for a generic bivariate polynomial or degree $3$, $4$, and $5$, respectively, and to {\tt MinRep} from \cite{Bornxn}, which returns matrices of the same size as the degree of a square-free polynomial. These are the only two methods that we compared {\tt Lin345} to, since other methods for solving systems of bivariate polynomials (for example \cite{Bornxn} and   \cite{BorMichiel}) return representations of bigger sizes  and moreover turn out to be slower.

For each $n$ we tested the three methods on 500 systems with real and 500 systems with complex polynomials. We measured the average computational time and
the accuracy of the obtained solutions. A measure of accuracy is the
 maximum value of
\begin{equation}
\max_{i=1,\ldots,n^2}\left(\max(|p_1(x_i,y_i)|,|p_2(x_i,y_i)|) \ \|J^{-1}(x_i,y_i)\|^{-1}\right),\label{eq:condn}
\end{equation}
where $J(x_i,y_i)$ is the Jacobian matrix of $p_1$ and $p_2$ at the computed root $(x_i,y_i)$. Here
$\|J^{-1}(x_i,y_i)\|^{-1}$ is an absolute
condition number of the root $(x_i,y_i)$ and we assume that in random
examples all roots are simple. The results in Table \ref{tbl:prim} show that for generic polynomials of degrees $3$ to $5$
{\tt Lin345} is faster and as accurate as {\tt Lin2} and {\tt MinRep.}

\begin{table}[!htbp]
\begin{footnotesize}
\begin{center}
\caption{Average computational time (arithmetic mean) in milliseconds
and average accuracy (geometric mean)
of {\tt Lin345}, {\tt Lin2}, and {\tt MinRep} for
random full bivariate polynomial systems of degrees $3$ to $5$.
}\label{tbl:prim}
\begin{tabular}{c|ccc|ccc} \hline \rule{0pt}{2.3ex}%
& \multicolumn{3}{|c|}{average time in ms} &
\multicolumn{3}{|c}{average accuracy} \\
degree & {\tt Lin345} & {\tt Lin2}  & {\tt MinRep}
& {\tt Lin345} & {\tt Lin2}  & {\tt MinRep}\\
\hline \rule{0pt}{2.3ex}%
3 & 1.3 & 2.0 & 3.6 &
    $8.0\cdot 10^{-15}$ & $6.9\cdot 10^{-15}$ & $1.1\cdot 10^{-14}$ \\
4 & 2.6 & 4.5 & 4.8 &
    $3.6\cdot 10^{-14}$ & $4.9\cdot 10^{-14}$ & $5.5\cdot 10^{-14}$ \\
5 & 4.8 & 9.8 & 6.6 &
    $2.1\cdot 10^{-13}$ & $6.2\cdot 10^{-14}$ & $3.7\cdot 10^{-13}$ \\
\hline
\end{tabular}
\end{center}
\end{footnotesize}
\end{table}

\end{example}
\medskip

\begin{example}\rm\label{ex:gensq}
In the second example we generate one of the polynomials in the same way as
in Example \ref{ex:genpl}, while we generate the other as
\[p_2(x,y)=(\alpha x+\beta y +\gamma)^2 q_2(x,y),\]
where $\alpha,\beta,\gamma$ are random numbers and
$q_2(x,y)$ is a random polynomial of degree $n-2$ for $n=3,4,5$. Since the
second polynomial is not square-free, we cannot apply {\tt MinRep}.
This however is not an obstacle for {\tt Lin345} that computes determinantal representations with $n\times n$ matrices.

\begin{table}[!htbp]
\begin{footnotesize}
\begin{center}
\caption{Average computational time (arithmetic mean) in milliseconds
and average accuracy (geometric mean)
of {\tt Lin345} and {\tt Lin2} for
random full bivariate polynomial systems of degrees $3$ to $5$
such that one polynomial is a multiple of a square of a linear polynomial.
}\label{tbl:square}
\begin{tabular}{c|cc|cc} \hline \rule{0pt}{2.3ex}%
& \multicolumn{2}{|c|}{average time in ms} &
\multicolumn{2}{|c}{average accuracy} \\
degree & {\tt Lin345} & {\tt Lin2}
& {\tt Lin345} & {\tt Lin2} \\
\hline \rule{0pt}{2.3ex}%
3 & 2.5 & 3.6  &
    $7.9\cdot 10^{-8}$ & $3.2\cdot 10^{-7}$ \\
4 & 4.0 & 6.7  &
    $7.7\cdot 10^{-8}$ & $1.6\cdot 10^{-7}$ \\
5 & 7.0 & 12.9 &
    $2.1\cdot 10^{-7}$ & $5.6\cdot 10^{-7}$ \\
\hline
\end{tabular}
\end{center}
\end{footnotesize}
\end{table}

For each $n$ we tested  {\tt Lin345} and   {\tt Lin2} on 500 systems with real and 500 systems with complex polynomials.
The results are presented in Table \ref{tbl:square}.
The computation takes longer than in Example \ref{ex:genpl}, because a slower
method needs to be applied to the two-parameter eigenvalue problem when multiple eigenvalues are detected. Moreover, the computed roots are not as accurate as in Example \ref{ex:genpl},
but this is expected as some of the roots are double and in numerical computations double roots behave as pairs of highly conditioned simple roots.
\end{example}
\medskip

In Example \ref{ex:permute} we showed that
permutations of $\alpha_i$ and of $\beta_i$ in Lemma~\ref{lem:reduk1} yield nonequivalent representations.
Next example shows that a change of variables can also result in nonequivalent determinantal representations.
\medskip

\begin{example}{\rm
Consider the Weierstrass cubic $p_3(x,y,z)=x(x+ y)(x-y)-y z^2=0$. The reduction~\eqref{eq:qn} on $\{y=0\},\ \{z=0\}$ yields the determinantal representation \eqref{eq:wei1} with
$\theta_1=1$ and $\theta_2=-1$ from Example~\ref{ex:Weier}.

To obtain another representation we apply the rotation~\eqref{eq:substit2} to vary the lines $\mathcal{L}_y$ and $\mathcal{L}_z$.
The rotation for $\pi/4$ around $x$ induces the following change of coordinates
$$\left[\begin{matrix}
x \cr
y  \cr
 z
\end{matrix}\right]=\left[\begin{matrix}
 1 & 0 &0 \cr
 0  & \sqrt{2}/2 & \sqrt{2}/2 \cr
 0 &-\sqrt{2}/2 & \sqrt{2}/2
\end{matrix}\right]
\, \left[\begin{matrix}
\widetilde{x} \cr
 \widetilde{y}  \cr
 \widetilde{z}
\end{matrix}\right].$$
By the reduction~\eqref{eq:qn} we obtain
$$\widetilde{p}_3(\widetilde{x},\widetilde{y},\widetilde{z})=\prod_{i=1}^3 \,(\widetilde{x}-\alpha_i\, \widetilde{y}-\beta_i\, \widetilde{z})
+\sqrt{2}\, \widetilde{y}\, \widetilde{z}\, (\widetilde{y}+\widetilde{z}), $$
where we choose
\begin{align*}
\alpha_1=\beta_1&=0.936717,\\
\alpha_2=\beta_2&=-0.468359 + 0.397592i,\\
\alpha_3=\beta_3&=-0.468359 - 0.397592i.
\end{align*}
Substituting $x,y,z$
back into the representation
$$\left[\begin{matrix} \widetilde{x}-\alpha_1 \widetilde{y}-\beta_1 \widetilde{z} & 0 & - \widetilde{z}\cr
 \widetilde{y} & \widetilde{x}-\alpha_2 \widetilde{y}-\beta_2 \widetilde{z} & 0 \cr
0 &   \widetilde{z} & \widetilde{x}-\alpha_3 \widetilde{y}-\beta_3 \widetilde{z} \end{matrix}\right]$$
gives
$$\left[\begin{matrix}x - 1.324717 y & 0 & 2y \cr
(y-z)/\sqrt{2} &  x + (0.662358 - 0.562279i) y & 0 \cr
0 &  (y+z)/\sqrt{2}  &  x + (0.662358 + 0.562279i) y  \end{matrix}\right].$$
}\end{example}
\\

\section{Conclusions}\label{sec:conc}

We presented a simple numerical algorithm for determinantal representations
of bivariate polynomials of degree $n\le 5$ with $n\times n$
matrices. Contrary to the other existing methods,
our algorithm works for arbitrary polynomials.
For the next degree, $n=6$, we did not succeed 
to
apply the same approach. The
smallest known determinantal representation that can be constructed
efficiently for any bivariate polynomial of degree $6$ thus remains to be of size
$10\times 10$ from~\cite{Bornxn} or \cite{BorMichiel}.

While the obtained representations have the optimal size according
to Dixon's theorem, they are not symmetric.
Let us remark that constructions
of symmetric representations are much more demanding as one needs to take into account additional geometry, for example flexes for cubics and bitangents for quartics (as explained for smooth curves in~\cite{HarrisGalois} and~\cite{VinnikovComplete}).
The reason is that a  smooth curve of degree $n$ has only a finite number of symmetric determinantal representations; on the other hand, all its determinantal representations can be parametrized by an open subset of the $\frac{(n-1)(n-2)}{2}$  dimensional Jacobian variety~\cite{VinnikovComplete}.

%
%
%

\end{document}